\numberwithin{subsection}{section}
\numberwithin{equation}{section}
\numberwithin{table}   {section}
\numberwithin{figure}  {section}
\theoremstyle{plain}
 \newtheorem {theorem}    {Theorem}[section]
 \newtheorem {lemma}      [theorem]{Lemma}
 \newtheorem {corollary}  [theorem]{Corollary}
\theoremstyle{definition}
 \newtheorem {definition} [theorem]{Definition}
\theoremstyle{remark}
 \newtheorem {remark}     [theorem]{Remark}
\newcommand{\missarg}{\,\cdot\,}
\newcommand{\mathC}{\mathbb{C}}
\newcommand{\mathN}{\mathbb{N}}
\newcommand{\mathZ}{\mathbb{Z}}
\newcommand{\mathR}{\mathbb{R}}
\newcommand{\calE} {\mathcal{E}}	% radialization (on horospheres)
\newcommand{\calH} {\mathcal{H}}
\newcommand{\calF} {\mathcal{F}}    % spherical Fourier transform
\newcommand{\calZ} {\mathcal{Z}}	% spherical Z-transform
\newcommand{\calR} {\mathcal{R}}	% Schwartz class
\newcommand{\calV} {\mathcal{V}}	% linear span of spherical functions
\newcommand{\calN} {\mathcal{N}} 
\newcommand{\Poiss}{K}      				  % Poisson kernel
\newcommand{\pcrit}{p_{\text{\rm crit}}}
\newcommand{\barra}{\,|\, }
\newcommand{\AutT}{\mathcal G}
\DeclareMathOperator{\Aut} {Aut}
\DeclareMathOperator{\Int}{Int}
\DeclareMathOperator{\dist}{dist}
\DeclareMathOperator{\spectrum}{sp}
\def\psiphi{\phi}
\begin{document}

% top matter
\title
 % running heads
 [Positive definite functions on semi-homogeneous trees]
 {Positive definite functions\\on semi-homogeneous trees\\and spherical representations}
\author
%[M.     ~A. Picardello]
{Massimo~A. Picardello}
\address
{Department of Mathematics\\
 Tor Vergata University of Rome\\
 Via della Ricerca Scientifica, 00133 Rome, Italy}
 
\thanks{
Partially supported by MIUR Excellence Departments Project awarded to
the Department of Mathematics, Tor Vergata University of Rome,  MatMod\@TOV
}
\email{picard@mat.uniroma2.it}

\subjclass
 {Primary 44A12;
Secondary 05C05,
          43A85}

\keywords
{Homogeneous and semi-homogeneous trees, 
 Laplace operators, 
spherical functions, positive-definite functions}

\begin{abstract}
{We consider the group $\AutT$ of isometries of a semi-homogeneous tree $T=T_{q_+,q_-}$ with valencies $q_+ +1$ and $q_- +1$ and its two orbits $V_+$, $V_-$ respectively. We make use of the action of $\AutT$ to equip the spaces of finitely supported radial functions on each of $V_\pm$ with convolution products, hence with a notion of positive definite functions. The  $\ell^1$-functions radial around a root vertex $v_0\in V_+$ form an abelian convolution algebra.
We study its multiplicative functionals, called spherical functions, given by eigenfunctions of the nearest-neighbor isotropic transition operator (the Laplace operator on $T$,
and determine which of them are positive definite. Each positive definite function gives rise to a unitary representation of $\AutT$; in this way, we produce a series of unitary spherical representations. For $q_+<q_-$, the representation whose spherical function has eigenvalue 0 is square-integrable.}
\end{abstract}

\maketitle

%\newpage
%\tableofcontents
%

%\newpage
%\part{Semi-homogeneous trees}
%
\section{Introduction}
Let $T$ be a homogeneous tree, that is, a graph without loops and the same number $q+1$ of neighbors for each vertex; $q$ is called the homogeneity degree.
The isotropic nearest-neighbor transition operator $\mu_1$  its set $V$ of vertices ( called  Laplace operator) has been studied since a long time (see \cite{Figa-Talamanca&Picardello} for references), and is related to the Poisson boundary $\Omega$ of $T$ (a topological boundary that yields a compactification of $V$) and its Poisson kernel $\Poiss$. The  setup of a  homogeneous tree has close ties to harmonic analysis, because functions on $V$ are endowed with a convolution product induced by its group of isometries $\AutT=\Aut T$ (or a simply transitive subgroup thereof), and  $\mu_1$ is a convolution operator. Appropriate estimates for the norms of convolution operators \cite{Haagerup} can then be used to determine the spectrum of $\mu_1$ on $\ell^p(V)$ for $1\leqslant p <\infty$ \cite{Figa-Talamanca&Picardello}*{Chapter 3, Theorem 3.3}. Choose a root vertex $v_0$ and denote by $\AutT_{v_0}$ its stability subgroup in $\AutT$. Then $(\AutT,\AutT_{v_0})$ is a Gelfand pair, that is, with the convolution product induced by $\AutT$, the algebra of two-sided $\AutT_{v_0}$-invariant function on $\AutT$ (or, in an equivalent realization, the algebra of  functions on $V$ radial around $v_0$) is commutative; that is, $(\AutT,\AutT_{v_0})$ is a Gelfand pair.
 Its spherical functions are the eigenfunctions $\phi(\missarg,v_0\barra \gamma)$ of $\mu_1$  radial around $v_0$ and normalized to $1$ at $v_0$: at the eigenvalue $\gamma$, the spherical function, that we denote by
 $\phi(\missarg,v_0\barra \gamma)$,
 was computed in \cite{Figa-Talamanca&Picardello-JFA}.
 
 It was shown in \cite{Figa-Talamanca&Picardello} that the spectrum of $\mu_1$ on $\ell^p(V)$ is the closure of the set $\{\gamma\in\mathC\colon \phi(\missarg,v_0\barra \gamma)\in\ell^r(V) \text{ for every }r>p\}$. Moreover, since $\mu_1$ is invariant under $\AutT$, its eigenspaces give rise to spaces of representations of $\AutT$,  called \emph{spherical representations}. The  representation at the eigenvalue $\gamma$ is unitary if $\gamma$ belongs to the $\ell^2$-spectrum of $\mu_1$, and unitarizable if $\phi(\missarg,v_0\barra \gamma)$ is positive definite.

A recent article \cite{Casadio-Tarabusi&Picardello-semihomogeneous_spectra}  considers trees that are \emph{semi-homogeneous}, i.e., with two  alternating homogeneity degrees $q_+$ and $q_-$.  The semi-homogeneous Laplace operator $\mu_1$ on functions on $V$ is again as the average operator on neighbors, and gives rise to a nearest-neighbor transition operator.  Its spherical functions are obtained as generalized Poisson transforms, by means of an explicit realization of the Poisson kernel for each eigenvalue $\gamma$, derived by an explicit computation of the first visit probability associated to the random walk generated by $\mu_1$.

On the other hand, the semi-homogeneous and the homogeneous settings are considerably different. Indeed, the group $\AutT$ has two orbits $V_+$ and $V_-$, hence it is not transitive and does not give rise to a convolution product. For the same reason, the natural definition of positive definite function does not make sense. 
As a consequence, we do not have the convolution  estimates of \cite{Haagerup} that have been used in the literature to compute the spectrum of $\mu_1$ (see \cite{Figa-Talamanca&Picardello}*{Chapter 3}), and it is much harder to build an analytic series of spherical representations of $\AutT$ by means of spherical functions.

Indeed, the basic problem raised by the existence of two orbits under $\AutT$  is the lack of a convolution product, hence of a suitable convolution algebra $\calR$ of radial functions (see also \cite{Casadio-Tarabusi&Picardello-algebras_generated_by_Laplacians}). In the homogeneous setting, the spectrum of $\mu_1$ is obtained from the multiplicative functionals on the  Banach algebra generated by $\calR$ in the $\ell^1$-norm. Here we need a new approach. The operator $\mu_1$, being a nearest-neighbor transition operator, induces jumps only between vertices of different homogeneities. An idea developed in \cite{Casadio-Tarabusi&Picardello-semihomogeneous_spectra} with the goal of computing the $\ell^p$-spectrum of $\mu_1$ focuses onto
its square $\mu_1^2$, that preserves the homogeneity, hence  acts on the two orbits separately, and is transitive on each of them. On the other hand, $\mu_1^2$ is related to the step-2 isotropic transition operator $\mu_2$, indeed, up to a scale factor, it coincided with $\mu_2$ plus a positive multiple of the identity.
% (that yields a rightward shift of the spectrum). On strictly semi-homogeneous trees, $\mu_1$ is not self-adjoint on $\ell^2(V)$, but it is self-adjoint on $\ell^2(V,m)$ where $m$ is a suitable weight given by two values, one for each homogeneity. Then, clearly, the spectrum of $\mu_1$ on $\ell^p(V)$ is the same as its spectrum on $\ell^p(V,m)$, and it is obtained from the spectrum of $\mu_2$ on $\ell^p(V_+)$ and $\ell^p(V_-)$ by taking the complex square root. 

On the other hand, the notion of adjacency induced by $\mu_2$ on $V_+$, that corresponds to distance 2 in $V$, transforms $V_+$ into the polygonal graphs $\Gamma_+$, studied in \cite{Iozzi&Picardello-Springer}, that consists of infinitely many complete polygons with $q_-+1$ vertices attached in a tree-like fashion, with $q_+ +1$ polygons joining at each vertex; a similar description holds for the polygonal graph $\Gamma_-$ that corresponds to the action on $V_-$. The action is transitive and gives rise on, say, $V_+$ to a nice convolution algebra of functions radial in this graph around a fixed reference vertex $v_0\in V_+$. For every $p$, the $\ell^p$-spectrum of $\mu_2$ on $V_\pm$ is known
\cite{Iozzi&Picardello-Springer}. By making use of this,  the $\ell^p$-spectrum of $\mu_1$  has been computed in \cite{Casadio-Tarabusi&Picardello-semihomogeneous_spectra}
%, and they are given, as in \cite{Figa-Talamanca&Picardello}*{Chapter 3, Theorem 3.3}, by the closure of the set $\{\gamma\in\mathC\colon \phi(\missarg,v_0\barra \gamma)\in\bigcap_{r>p} \ell^r(V)\}$.
%These sets are elliptical regions in the complex plane, centered not at the origin as in the homogeneous setting, but at the point of the positive half-axis that corresponds to the rightward shift mentioned above. Hence, in Section \ref{Sec:Spectrum_of_M1}, we compute the $\ell^p$-spectra of $\mu_1$, obtained by extracting the square root, therefore obviously invariant under the reflection  $\gamma\mapsto -\gamma$ around the origin. Because of the rightward shift,
%there is an interval of values of $p$ around $2$ where the $\ell^p$-spectrum of $\mu_1^2$ does not contain the origin. Then, for these values of $p$, the $\ell^p$-spectrum of $\mu_1$  does not contain the origin either and is disconnected, unlike for homogeneous trees.

An unusual fact occurs in the semi-homogeneous setting  \cite{Casadio-Tarabusi&Picardello-semihomogeneous_spectra}: there is  a spherical function that belongs to $\ell^2(V)$, and also to $\ell^p(V)$ for some $p<2$. Indeed, the spherical function at the eigenvalue 0 belongs to $\ell^p(V)$ for every $p>1+%\frac
{\ln q_+}/{\ln q_-}$, that is smaller than 2 if and only if $q_+<q_-$. Every other bounded spherical function $\phi(\missarg,v_0\barra \gamma)$ belongs to $\ell^p(V)$ for some $p>2$ depending on $\gamma$.

The lack of transitivity of $\AutT$ gives rise to some interesting questions.
Since $\mu_1$ is $\AutT$-invariant, i.e., it commutes with $\AutT$,  its eigenspaces are also invariant under $\AutT$, hence they give rise to representations of this group, namely,  the spherical representations. The representation space at the eigenvalue $\gamma$ is the span of all translates of $\phi(\missarg,v_0\barra \gamma)$. In the homogeneous setting, this representation is unitary or unitarizable if and only if $\phi(\missarg,v_0\barra \gamma)$ is positive definite (thereby providing the necessary Hilbert norm via the GNS construction) \cites{Figa-Talamanca&Picardello-JFA, Figa-Talamanca&Picardello}. In the semi-homogeneous setting, the notion of positive definite makes sense for functions defined on the group $\AutT$, but not for functions on $V$, that cannot be lifted to  $\AutT$ since the group action is not transitive. Even though  there are only two orbits, it is not known how to equip the space of functions on $V$ with a suitable definition of positive definite function, or at least a Hilbert norm on the linear span of all translates of $\phi(\missarg,v_0\barra \gamma)$. Therefore it is not clear which spherical representations are unitary.

For the same reason, $\AutT$ does not induce a convolution product on functions on $V$. On the other hand, both $V_+$ and on $V_-$ are homogeneous spaces for $\AutT$, indeed $V_\pm=\AutT/\AutT_{v_\pm}$ for any choice of reference vertices $v_\pm\in V_\pm$, one for each homogeneity.
Between two functions defined on each of these homogeneous spaces $\AutT$ induces a convolution product, but only if one of the two functions is bi-$\Aut_{v_\pm}$-invariant, as follows.

For simplicity, let us restrict attention to functions on $V_+$ and denote again by $v_0$ the reference vertex in $V_+$. If $v\in V$ and $\lambda\in\AutT$ is such that $v=\lambda v_0$, the convolution product induced by $\AutT$ on its orbit $\AutT/\AutT_{v_0}\approx V_+$ should be defined as $f*g(v)= \int_{\AutT} f( \tau^{-1}\lambda v_0)\,g(\tau v_0) \,d\tau$, where the measure $d\tau$ is the Haar measure of the (unimodular) group $\AutT$. On the other hand, the result is invariant on right cosets only if $f$ is two-sided invariant, that is, radial around $v_0$ (see Section \ref{Sec:multiplicative_functionals} for more details). In particular, we obtain a radial convolution algebra on $V_+$ (and another on $V_-$). This actually leads to two different convolution products, one for each orbit (see \cite{CCKP}). It is not clear how to define the convolution of functions not supported on a single orbit in such a way that it coincides with the usual definition when the tree is homogeneous. Moreover, an analogous of Haagerup's convolution estimate \cite{Haagerup}, that is the main tool for computing the spectra of the Laplacian in the well-established approach of \cite{Figa-Talamanca&Picardello}*{Chapter 3}, is complicated in this setting \cite{Iozzi&Picardello}.

The spherical representation $\pi_\gamma$ of a group $G$ acting simply transitively on a homogeneous tree is known to be irreducible for $\gamma\in\spectrum_{\ell^1}(\mu_1)$, by \cite{Figa-Talamanca&Picardello}*{Chapter 5} and \cite{Figa-Talamanca&Nebbia}. The argument, later extended  to non-isotropic nearest-neighbor transition operators  \cite{Figa-Talamanca&Steger}, proceeds by proving that the projector onto a cyclic vector is the weak limit, as $n\to\infty$, of the average of $\pi_\gamma(\tau)$ over all the elements $\tau\in \AutT$ that move $v_0$ to vertices of length $n$. Here again, the lack of transitivity makes it unhandy to reproduce the same argument. Instead of an irreducible representation, we obtain a representations reducible as the direct sum of two components, one for each orbit.

In this short note, we study unitary representation of $\AutT$ by means of its action on $V_+\approx \AutT/\AutT_{v_0}$ (or the analogous action on $V_-$) and the convolution induced by this action on the radial space $\ell^1_\#(V_+)$, that turns out to be the abelian convolution algebra generated by the Laplacian. The group $\AutT$ can be factorized as $\AutT=\calF \AutT_{v_0}$, where $\calF$ is any discrete subgroup acting simply transitively on $V_+$. The convolution product is much simpler when considered on $\calF$, whose Cayley graph is the polygonal graph $\Gamma_+$ associated to $V_+$. A representation theory for $\calF$ follows in the same way with more readable statements and proof, but we prefer to work on the larger group $\AutT$ that is naturally associated to the whole tree; observe that any isometry of $T$ restricts to isometries of $V_\pm$, and conversely, every isometry of $V_+$ (or $V_-$) extends uniquely to an isometry of $T$. Besides, the choice of $\calF$ is not unique; for instance, the choice of $\calF$ in \cite{Iozzi&Picardello-Springer} is the free product of $q_+ +1$ copies of $\mathZ_{q_-+1}$; another choice is $\mathZ_{q_+ +1}*\mathZ_{q_- +1}$, that is the discrete group naturally associated with the dual graph of $\Gamma_+$.  For an easier understanding, the reader is invited to rephrase our proofs for $\calF$, and compare our results with those of  \cite{Iozzi&Picardello-Springer}, that yield the $\ell^p$-spectrum of the Laplacian on $\Gamma_+$. For the $\ell^p$-spectrum of the Laplacian on( a semi-homogeneous tree, see \cite{Casadio-Tarabusi&Picardello-semihomogeneous_spectra};
here we do not study spectra, only positive definite functions and group representations.

We make use of the action of $\AutT$ on $V_+$ and its natural involution to introduce positive definite functions on $V_+$. Moreover,
 we study the multiplicative functionals on the algebra $\ell^1_\#(V_+)$, that correspond to the normalized bounded radial eigenfunctions of the Laplacian \emph{(spherical functions)}, prove that the spherical functions are positive definite if and only if they are real-valued and bounded, and conclude that the spherical functions that correspond to real eigenvalues in the $\ell^1$-spectrum of the Laplacian give rise, via the GNS construction, to a family of unitary representations of $\AutT$ that are irreducible on $\ell^2(V_\pm)$, whereas they decompose as direct sum of two irreducible representations on $\ell^2(V)=\ell^2(V_+)\oplus \ell^2(V_-)$. For $q_+<q_-$, the spherical function at the eigenvalue 0 belongs to $\ell^2(V_+)$ and is positive definite, and the representation is square-integrable, that is, it is a unitary subrepresentation of the regular representation of $\AutT$. It is worth noting that, for $q_+ < q_-$, a discrete subrepresentation of the regular representation of the free product of $q_+ +1$ copies of $\mathZ_{q_- +1}$ (the discrete group acting simply transitively on $\Gamma_+$ introduced above) was found in \cites{Kuhn&Soardi, Faraut&Picardello}.

The author acknowledges many enlightning conversations with Enrico Casadio-Tarabusi. An alternative approach to the results of this article will appear in a forthcoming joint paper.

\section{Semi-homogeneous trees and the Laplace operator}
\label{Sec:semi-homogeneous_horospheres_and_circles}

A \emph{tree} $T$ is a connected, countably infinite, locally finite graph without loops. The nodes of $T$ are called \emph{vertices}; the set of all vertices is denoted by $V$. Two distinct vertices $v,v'$ are \emph{adjacent}, or \emph{neighbors}, if they belong to the same edge: we shall write $v\sim v'$.
Let us fix a reference vertex $v_0$. The number of neighbors of $v$ is $q_v+1$, where $q_v$ is the  \emph{homogeneity degree}, that is, the number of outward neighbors of $v$ with respect to $v_0$ if $v\neq v_0$.
Let us fix a \emph{parity}, that is, an alternating function $\epsilon\colon V\to \{\pm 1\}$. The level sets of $\epsilon$ are denoted by $V_+$ and $V_-$. 
%; the value $\epsilon(v)$ at any $v\in T$ is called the parity of $v$.

A \emph{semi-homogeneous} tree $T=T_{q_+,\,q_-}$ has two alternating homogeneity degrees ${q_+}$ on $V_+$ and ${q_-}$ on $V_-$. If ${q_+}={q_-}$ then the tree is \emph{homogeneous}. We  assume $q_+, q_->1$ and choose  $v_0\in V_+$, that is,
\begin{equation}\label{eq:q_{v_0}=q_+}
    q_{v_0}=q_+.
\end{equation}
%
% Then the number of vertices of length $n$ is
% %
% \begin{equation}\label {eq:number_of_vertices_of_length_n}
% \{w\colon \dist(w,v)=n\}=
% \begin{cases}
% 1 &\text{for $n=0$,}\\[.2cm]
% (q_++ 1)\, (q_-q_+)^{(n-1)/2} &\text{for odd $n$,}
% \\[.2cm]
% (q_++ 1) \,(q_-q_+)^{n/2-1} \,q_- &\text{for even $n>0$.}
% \end{cases}
% \end{equation}

Since it has no loops, a homogeneous tree is the Cayley graph of the free product $\mathfrak F$ 
of $q+1$ copies of the two-element group $\mathZ_2$. Indeed, this free product embeds in the group $\AutT$ of \textit{automorphisms} of $T$, the invertible self-maps of $V$ that preserve adjacency: the generators of the factors are  automorphisms that reverse the edges that contain $v_0$ (see, for instance, \cite{Figa-Talamanca&Picardello}*{Chapter 3, Section V}), and we have the factorization $\AutT=\mathfrak F\, \AutT_{v_0}$. In particular, $\mathfrak F$ acts simply transitively on $T$ and induces a convolution product on functions on $V$. 
Therefore $\AutT$ is transitive on $V$ if $T$ is homogeneous.  On the space of functions on $\AutT$ that are two-sided-invariant  under the stability subgroup $\AutT_{v_0}\subset \AutT$ of $v_0$, the convolution operation induced by the group $\AutT$ is a lifting of the convolution induced by 
$\mathfrak F$
(for homogeneous trees, see \cite{Casadio-Tarabusi&Picardello-Radon_on_trees_and_counterparts}*{Appendix} and
\cite{Casadio-Tarabusi&Gindikin&Picardello-Book}*{Subsection 3.1.1}
).
 On the other hand, if ${q_+}\neq {q_-}$, then there are two orbits of $\AutT$ on $V$, namely $V_{+}$ and  $V_{-}$.

At each vertex $v$, the Laplace operator applied to a function $f$ on $V$ yields the average of the values of $f$ at the neighbors of $v$. The number of neighbors depends on the parity of the vertex. That is,
\begin{equation}\label{eq:semihomogenous_Laplacian}
\mu_1 f(v) =
%c_\pm \sum_{w\sim v} f(w)\qquad\text {if } v\in V_\pm.
\frac1{q_{\epsilon(v)}+1}\; \sum_{w\sim v} f(w).
\end{equation}
%
% Note that, if  $c_+$ (respectively, $c_-$) vanishes, then $\mu_1f=0$ on $V_+$ (respectively, on $V_-$), and the eigenfunctions of $\mu_1$ are supported on vertices of only one parity: for the moment we shall exclude this degenerate case and assume $c_\pm\neq 0$. If we want that the constant functions be harmonic, that is 1-eigenfunctions of $\mu_1$, then we must assume that $\mu_1$ is a stochastic transition operator, i.e., we must take $c_\pm = \frac1{q_\pm +1}$. Therefore, from now on, we limit attention to  $c_\pm = \frac1{q_\pm +1}$. 
%So, $\mu_1$ is the average operator on neighbors.

\begin{remark}\label{rem:recurrence_relation_of_the_semi-homogeneous_Laplacian}
Following \cite{Casadio-Tarabusi&Picardello-semihomogeneous_spectra}, let us consider the space $\mathfrak R$ of summation operators with kernel, acting on $\ell^1(V)$, that is,  of the type $R f(v)=\sum_{w\in V} r(v,w) f(w)$, whose kernel $r(v,w)$ is of  finite range (that is, it vanishes if $\dist(v,w)>N$ for some $N\in\mathN$ depending on $r$). Then $\mu_1$ belongs to $\mathfrak R$. More generally, for $n\in\mathN$  we set
\begin{equation}\label{eq:M_n}
\mu_n f(v)= \frac1{\left| \{w\colon \dist(w,v)=n\} \right|}\sum_{\dist(w,v)=n} f(w).
\end{equation}

An elementary computation shows that
%
%\begin{equation}\label{eq:recurrence_for_M1_first_step}
%    \mu_1^2 f(v) = \frac 1{\widetilde q_v +1} f(v) + \frac {\widetilde q_v}{\widetilde q_v +1} \mu_2 f(v)
%\end{equation}
%%
%and, 
for every $n>0$,
\begin{equation}\label{eq:recurrence_for_M1}
\mu_1\mu_n f(v)=\mu_n\mu_1f(v)= 
\frac 1{q_v +1}\; \mu_{n-1} + \frac {q_v}{q_v +1}\; \mu_{n+1}\,. 
\end{equation}
%
%Hence, if $f$ is a $\gamma$-eigenfunction of $\mu_1$, then $f\left|_{V_\pm}\vphantom{|_{V_\pm}}\right.$ are eigenfunctions of $\mu_2^\pm$ with respective eigenvalues 
%% %
%%  \begin{equation}\label{eq:the_nonhomogeneous_eigenvalue_map}
%%      \gamma_\pm= \frac{\gamma q_\mp +1} {q_\mp + 1}.
%%  \end{equation}
%
%
% \begin{equation}\label{eq:the_nonhomogeneous_eigenvalue_map}
%     \gamma_\pm= \frac{(q_\mp +1) \gamma^2 -1}{q_\mp}\;.
% \end{equation}

It follows from \eqref{eq:recurrence_for_M1} that the vector space $\mathfrak R$ is a commutative algebra generated by $\mu_1$, and on every ray $[v_0, v_1,\dotsc)$ each $\gamma$-eigenfunction of $\mu_1$ radial around $v_0$ satisfies for every $n\in\mathN$ the recurrence relation 
\begin{equation}\label{eq:semihomogeneous_recurrence}
\begin{split}
\gamma\,f(v_0)  &= f(v_1) \qquad\text{if }|v_1|=1;
\\[.2cm]
\gamma\,f(v_n)&=\frac1{q_{v_n} +1} \; f(v_{n-1}) + \frac{q_{v_n}}{q_{v_n} +1}\; f(v_{n+1}) \qquad\text{if }|v_n|=n>0.
\end{split}
\end{equation}

In particular, for each eigenvalue $\gamma$ there is exactly one $\gamma$-eigenfunction $f$ radial around any given vertex $v_0$ and satisfying the initial condition $f(v_0)=1$. If $\gamma= 0$, this eigenfunction must vanish on each $v$ with $|v|=1$ by the first identity in \eqref{eq:semihomogeneous_recurrence}, henceafter on all of $V_-$ by the second identity; it is now easy to compute this eigenfunction, that we denote by $\psiphi (v,v_0\barra 0)$, in the usual case $v_0\in V_+$:
\begin{equation}\label{eq:the_semi-homogeneous_spherical_function_of_eigenvalue_zero}
\psiphi (v,v_0\barra 0)=
\begin{cases}
(-1)^{\frac{|v|}2} q_-^{-\frac{|v|}2}&\qquad\text{if $|v|$ is even},\\[.2cm]
0                       &\qquad\text{if $|v|$ is odd}.
\end{cases}
\end{equation}
%
%A similar example was suggested to us by W.~Woess (personal communication).
%
%Of course, if we choose $v_0\in V_-$, at the right-hand side we must replace $q_-$ by $q_+$.
% , that is, for all $q_\pm$,
% \begin{equation}\label{eq:the_semi-homogeneous_spherical_function_of_eigenvalue_zero_for_all_homogeneities}
% %
% \psiphi (v,v_0\barra 0)=
% \begin{cases}
% (-1)^{\frac{|v|}2} \widetilde q_{v_0}^{\,-\frac{|v|}2}&\qquad\text{if $|v|$ is even},\\[.2cm]
% 0                       &\qquad\text{if $|v|$ is odd}.
% \end{cases}
% \end{equation}
%

Let
\begin{equation}\label{eq:pcrit}
\pcrit=%1+\frac{ \ln q_+}{\ln q_-}= 
\frac{\ln(q_+q_-)}{\ln q_-}.
\end{equation}
If  $C(n)=\{v\colon |v|=n\}$, it is immediately seen that, for every $n$, $\left|C(2n) \right|=(q_+q_-)^n$.
This yields the following $\ell^p$-behavior: 
\begin{equation}\label{eq:phi(v,v_0,0)_in_l^p_iff_p>p_crit}
\psiphi (\missarg,v_0\barra 0)\in \ell^p(V) \text{ if and only if } p>\pcrit.
\end{equation}
In particular,
$
\psiphi (\missarg,v_0\barra 0)\in \bigcap_{p>\pcrit}\ell^p(V)$.
Note that $\pcrit=2$ in the homogeneous setup $q_+=q_-$, and  $\pcrit>2$ if and only if $q_+>q_-$. Therefore
\begin{equation}\label{eq:phi(v,v_0,0)_in_l^2_iff_q_+<q_-}
\psiphi (\missarg,v_0\barra 0)\in \ell^2(V) %\text{ if and only if }
\Longleftrightarrow q_+<q_-.
\end{equation}
%
% \begin{equation}\label{eq:belonging_to_ell^p_of_the_semi-homogeneous_spherical_function_of_eigenvalue_zero}
% \psiphi (v,v_0\barra 0)\in
% \begin{cases}
% \ell^2(V)&\qquad\text{if $q_+<q_-$},\\[.2cm]
% \bigcap_{p>2}\ell^p(V)&\qquad\text{if $q_+=q_-$},\\[.2cm]
% \text{no $\ell^p(V)$ space}  &\qquad \text{if $q_+>q_-$}.
% \end{cases}
% \end{equation}
%
%
As a consequence, if (and only if) $q_+<q_-$, then $0$ belongs not only to $\spectrum(\mu_1;\,\ell^2(V))$, but also to the discrete spectrum of $\mu_1$ as an operator on $\ell^2$ and more generally on $\ell^p$ for every $p\geqslant 2$.
% (hence also for $1\leqslant p \leqslant 2$, by the argument in \cite{Figa-Talamanca&Picardello}*{Chapter 3, Theorem 3.3} and \cite{Iozzi&Picardello-Springer}*{Lemma 1}).
\end{remark}

\begin{definition}\label{def:spherical_functions}
For $\gamma\in\mathC$, the $\gamma$-eigenfunction of $\mu_1$ radial around $v_0$ with value 1 at $v_0$ is called \emph{spherical function} with eigenvalue $\gamma$ and is denoted by $\phi(\missarg,v_0\barra \gamma)$.
\end{definition}

\section{The convolution algebra of radial functions and its multiplicative functionals}\label{Sec:multiplicative_functionals}

Given a locally compact group $G$ and a compact subgroup $K$,  the pair $(G,K)$ is a \emph{Gelfand pair} if the convolution algebra $L^1(K\backslash G/K)$ is commutative. Here we are interested in the set-up where $G=\AutT$ is the group of automorphisms of an infinite homogeneous or semi-homogeneous tree.
%A tree is a connected, locally finite graph without loops. Its set of vertices will be denoted by $V$ and its set of edges by $E$. A tree $T$ is homogeneous if each vertex has the same number $q+1$ of neighbors; the number $q$ is called \emph{homogeneity degree}, and we write $T=T_q$ when necessary. We  limit attention to $q>1$, that is, trees of exponential growth, not $\mathZ$.
%\par
%Two edges $e,e'$ are adjacent if they share exactly one vertex. So, every edge has $2q$ neighbors. In general, the \textit{chain} of edges from $e$ to $e'$ is the minimal finite sequence $e=e_0,e_1,\dotsc,e_n=e'$ in $E$  such that $e_{j-1},e_j$ are adjacent for every $j=1,\dotsc,n$. The \textit{distance}  $\dist(e,e')$ is this non-negative integer $n$. 
%Choose a 
%reference edge $e_0$ and
% write 
% $|e|=\dist(e,e_0)$, the \emph{length} of $e$. For $e_1, e_2\in E$ we write $e_1 \leqslant e_2$ if $e_1$ belongs to the path from $e_0$ to $e_2$. For $e',e\in E$ we denote by  $S(e',e)$ the set of all edges such that the path from $e'$ to $e''$ contains $e$.
%We write 
%$S(e)=S(e_0,e)$, and call \emph{sectors} the sets of this type.

The isotropy subgroup $\AutT_v$ of $\AutT$ at any $v\in V$ is compact, and  $\AutT/\AutT_v$  
is discrete. 
If $T$ is homogeneous, $\AutT$ acts transitively upon  $V$, and  $\AutT/\AutT_v$ is in bijection with $V$. Moreover, $\AutT_v$ acts transitively upon the circle $C_n$ of vertices at any distance $n$ from $v$, or equivalently, the action of $G$  is doubly transitive. On the othe hand, if $T$ is semi-homogeneous but not homogeneous, $G$ has the two orbits  $V_\pm\subsetneq V$, and for any $v_+\in V_+$, $v_-\in V_-$, $n\in\mathN$, $K_{v_\pm}$ acts transitively upon the circle of vertices in $V_\pm$ at distance $n$ from $v_\pm$, respectively. Therefore $\AutT_{v_\pm}$ in in bijection with $V_\pm$.  We shall restrict attention to the semi-homogeneous not homogeneous setting, and the orbit $V_+$, with reference vertex $v_0$.

By this bijection, summable functions on the discrete space $V_+$ lift to summable functions on $G$ (with respect to its Haar measure). Hence, the convolution product on $G$ gives rise to a convolution product on  $\ell^1(V_+)$, and in particular on finitely supported functions therein; we shall often restrict attention to this space. Note that the liftings from $G/\AutT_{v_0}$ to $G$ identify right $\AutT_{v_0}$-invariant functions on $G$ with functions on $V_+$. Write $K=\AutT_{v_0}$: then  
the two-sided $K$ invariant functions on $G$ are \emph{radial} functions on  $V_+$, in the sense that they depend only on the distance from  $v_0$. We denote the corresponding $\ell^1$ space by $\ell^1_\#(V_+)$. We shall show that $(\AutT,\AutT_{v_0})$ is a Gelfand pair, that is, $\ell^1_\#(V_+)$ is an abelian convolution algebra. All this also works word by word for $V_-$. In the special case of homogeneous trees, the convolution induced by $\AutT$ was studied in \cites{
Casadio-Tarabusi&Gindikin&Picardello-Book,  Casadio-Tarabusi&Picardello-Radon_on_trees_and_counterparts}; some preliminary facts about convolution in the  semi-homogeneous settings are in \cites{CCKP, Casadio-Tarabusi&Picardello-algebras_generated_by_Laplacians},

Consider a function $f:G \mapsto \mathC$, and the Haar measure $\mu$ on $\AutT$ normalized on $K$. The usual definition of convolution for functions on $\AutT_v$ is $u_1*u_2(\tau)= \int_{\AutT_{v_0}} u_1(\lambda^{-1}\tau)\,u_2(\lambda)\,d\mu(\lambda)$, since $\AutT_{v_0}$ is unimodular; this product is associative. Let $\lambda\mapsto \widetilde\lambda$ be the canonical projection of $\AutT_{v_0}$ to $\AutT/\AutT_{v_0}$, $\widetilde\mu$ the quotient measure of $\mu$ (that is, the counting measure) on $\AutT/\AutT_{v_0}$.  
Let us assume that $u_2$ is right-$\AutT_{v_0}$-invariant and $u_1$ bi-$\AutT_{v_0}$-invariant. Then  $u_1*u_2$ is bi-$\AutT_{v_0}$-invariant. Indeed, the right-invariance is clear, and the left-invariance follows from unimodularity:  for every $\kappa\in \AutT_{v_0}$, one has $u_1*u_2(\kappa\tau)= \int_G u_1((\kappa^{-1}\lambda)^{-1}\tau)\,u_2(\lambda)\,d\mu(\lambda)=
\int_{\AutT} u_1(\lambda^{-1}\tau)\,u_2(\kappa\lambda)\,d\mu(\lambda)=u_1*u_2(\tau)$. Moreover, the convolution can be regarded as a product between $u_1$ on $\AutT_{v_0}\backslash \AutT/\AutT_{v_0}$  and  $u_2$ on $\AutT/\AutT_{v_0}$ and functions as follows:
\begin{align*}
u_1*u_2(\widetilde\tau)&=\int_{\AutT_{v_0}}\int_{\AutT_{v_0}}  u_1(\kappa^{-1}\lambda^{-1}\tau)\,u_2(\lambda\kappa)\,d\mu(\kappa)\,d\widetilde\mu(\widetilde\lambda)\\[.2cm]
&= \int_{\AutT_{v_0}} u_1\left({{\lambda^{-1}\tau}}\right)\,u_2(\lambda)\,d\widetilde\mu(\widetilde\lambda)
= \int_{\AutT_{v_0}} u_1\left({\widetilde{\lambda^{-1}\tau}}\right)\,u_2(\widetilde\lambda)\,d\widetilde\mu(\widetilde\lambda)
.
\end{align*}
Because of the bijections introduced above, this defines a convolution product between functions on $V_+$ and radial functions on $V_+$. Now assume that also $u_1$ be bi-$\AutT_{v_0}$-invariant.

%Take $f$  bi-$\AutT_{v_0}$-invariant  and $g$ right-$\AutT_{v_0}$-invariant functions on $\AutT$.   
%Note that $f$ and $g$ can be regarded as functions on $E$, with $f$ 
If $f$ and $g$ are regarded as functions on $V_+$, with $f$ radial around $v_0$,
their convolution on  $V_+$ becomes

\begin{equation}\label{eq:homogeneous_convolution}
f*g(v)=\sum_{v'\in V} f(\dist(v,w))\,g(w).
\end{equation}
Every $\tau\in \AutT_{v_0}$ extends to an operator on functions on $V_+$ by the rule
 $\tau g(v) = g(\tau^{-1} v)$. For $V\in V_+$ choose $\tau[v]\in \AutT_{v_0}$ such that $\tau[v]\,(v_0)=v$; the choice of $\tau[v]$ is determined only up to elements in its right coset
modulo  $K$; we set $\tau[v] \delta_w(u)=\delta_w(\tau[v]u)$. Then the definition \eqref{eq:homogeneous_convolution} of convolution of $f$ and $g$ with $f$ radial is equivalent to
\begin{equation}\label{eq:convolution_as_inner_product}
f*g(v) = \langle \tau[v]\, f,\,g\rangle,
\end{equation}
where $\langle\missarg,\,\missarg\rangle$ denotes the inner product in $\ell^2(V_+)$.
Of course, this implies
\[
f*g(v_0)=\langle f,\,g\rangle.
\]
and, for $f,g,h\in\ell^1_\#(V_+)$
\begin{equation}\label{eq:associativity}
\langle f,\,g*h\rangle = f*(g*h)(v_0)=(f*g)*h(v_0)=\langle  f*g,h\rangle.
\end{equation}
%\[
%\delta_w*f(v)= \langle \tau[v]\delta_w,\, f\rangle = \langle \tau[v]\delta_w,\, \calE f\rangle =
%\langle \calE(\tau[v]\delta_w),\, f\rangle .
%\]

Now, if $f$ and $g$ are both bi-$K$-invariant, i.e., radial around $v_0$, and $v\in V_+$,
\begin{equation}\label{eq:radial_homogeneous_convolution}
f*g(v)=f*g(\dist(v,v_0))=\sum_{w\in V_+} f(\dist(v,w))\, g(\dist(w,v_0)).
\end{equation}
So, the convolution of radial %bi-$K$-invariant 
functions is radial, %bi-$K$-invariant, 
hence 
$\ell^1_\#(V_+)$ is a convolution algebra. This algebra is the closure in the $\ell^1$ norm of the algebra $\mathfrak{R}_\#(V_+)$ of radial finitely supported functions. %Of course, radial functions are constant on the circles in $V_+$ with center $v_0$. 
From now on, we shall use the term \emph{radial} function on $V_+$ around $v_0$ 
%(or on $V_-$ around a fixed reference vertex $v_1\in V_-$) 
instead of bi-$\AutT_{v_0}$-invariant function on $\AutT_{v_0}$. 

For all functions  $f,g$ on $V_+$ with $f$ radial, and $\tau\in \AutT_{v_0}$,
one has $\tau (f*g)=f*\tau g$, where $\tau g$ is defined by $\tau g(v) = g(\tau^{-1} v)$. 
Indeed, 
\begin{equation}\label{eq:automorphisms_commute_with_convolutions_by_radial_functions}
\begin{split}
\tau(f*g)(v) &= \sum_{v'\in V_+} g(v')\,f(\dist(\tau^{-1}v,\,v')) = \sum_{v''\in V_+} g(\tau^{-1} v'')\,f(\dist(\tau^{-1}v,\,\tau^{-1} v'') )
\\[.1cm]
&= \sum_{v''} g(\tau^{-1}v'')\,f(\dist(v,\,v'') )= f*\tau g(v).
\end{split}
\end{equation}

%Let $C(n)=\{v\colon |v|=n\}$ and 
Denote by $\calE$  the radialization operator around $v_0$ on finitely supported functions, that is,
$\calE g (v)=\dfrac 1{\#C(|v|)} \sum_{w\in C(|v|)} g(w)$.
Then, by \eqref{eq:automorphisms_commute_with_convolutions_by_radial_functions}, for all $f,g\in\ell^1(V)$,
\begin{equation}\label{eq:radialization_commutes_with_radial_convolvers}
 \calE (f*g) = f * \calE g \quad\text{  if $f$ is radial.}
\end{equation}

Let $\mu_{n}$ be the radial function that is non-zero only if $|v|=n$ and with $\ell^1$-norm 1, that is $\mu_n=\calE \delta_v$ for $|v|=n$. Note that $\mu_n(v)=p_n(v_0,v)$, where $p_n$ are the $n$-step isotropic transition probabilities associated to the operator $M_n$ of \eqref{eq:M_n}. Therefore, by \eqref{eq:homogeneous_convolution},
\begin{equation*}
\mu_2 * \mu_{2n}
= \frac 1{(q_+ +1)q_- } \,\bigl(\mu_{2n-2} + (q_--1) \mu_{2n} + q_+q_-\mu_{2n+2}\bigr).
\end{equation*}
It follows that the radial convolution algebra $\ell^1_\#(V_+)$ is generated by  $\mu_2$, hence it is abelian.

\begin{corollary}\label{cor:Chebyshev_polynomials}
For every $n\in\mathN$, there exists a (unique) polynomial $P_{n}$ of degree $n$ such that $\mu_{2n}=P_n(\mu_2)$.
\end{corollary}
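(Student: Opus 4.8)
The plan is to induct on $n$. I would first rewrite the three-term recurrence for $\mu_2*\mu_{2n}$ displayed just above the statement by solving for its highest term, namely
\[
\mu_{2n+2} = \frac{q_+ + 1}{q_+}\, \mu_2 * \mu_{2n} - \frac{q_- - 1}{q_+ q_-}\, \mu_{2n} - \frac{1}{q_+ q_-}\, \mu_{2n-2}
\]
for $n\geqslant 1$, which is legitimate because the assumption $q_+>1$ makes the coefficient of $\mu_{2n+2}$ nonzero. For the base of the induction, $\mu_0=\delta_{v_0}$ is the identity of the convolution algebra $\ell^1_\#(V_+)$, so one may take $P_0=1$, while $P_1(x)=x$ accounts for $\mu_2$.

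For the inductive step I would assume $\mu_{2k}=P_k(\mu_2)$ with $\deg P_k=k$ for all $k\leqslant n$. Since $\ell^1_\#(V_+)$ is commutative and $\mu_{2n}$ is by hypothesis a convolution polynomial in $\mu_2$, one has $\mu_2*\mu_{2n}=\bigl(x\,P_n(x)\bigr)\big|_{x=\mu_2}$; substituting into the displayed identity dictates the definition
\[
P_{n+1}(x)=\frac{q_+ + 1}{q_+}\, x\,P_n(x)-\frac{q_- - 1}{q_+ q_-}\, P_n(x)-\frac{1}{q_+ q_-}\, P_{n-1}(x),
\]
so that $\mu_{2n+2}=P_{n+1}(\mu_2)$. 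The degree count is then immediate: the first summand has degree $n+1$ with nonzero leading coefficient by the inductive hypothesis, while the other two have degrees $n$ and $n-1$, whence $\deg P_{n+1}=n+1$. This settles existence.

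For uniqueness, the key point is that $\mu_0,\mu_2,\mu_4,\dotsc$ are linearly independent, being supported on the pairwise disjoint spheres $C(0),C(2),C(4),\dotsc$. Convolution by $\mu_2$ raises the radius of a support by at most $2$, so $\mu_2^{*k}$ is a combination of $\mu_0,\dotsc,\mu_{2k}$ whose $\mu_{2k}$-component is nonzero (again by the nonvanishing of the top coefficient in the recurrence). Hence the transition between $\{\mu_2^{*k}\}_{k\leqslant n}$ and $\{\mu_{2k}\}_{k\leqslant n}$ is triangular with nonzero diagonal, hence invertible, so the powers $\mu_2^{*0},\dotsc,\mu_2^{*n}$ are themselves linearly independent, and a polynomial $P$ with $P(\mu_2)=\mu_{2n}$ is unique. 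The computations are elementary throughout; the one step that deserves care is this final triangularity argument, which is precisely what upgrades existence to uniqueness of $P_n$.
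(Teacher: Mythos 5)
Your proof is correct and follows essentially the route the paper intends: the paper states the corollary as an immediate consequence of the displayed three-term recurrence for $\mu_2*\mu_{2n}$ and gives no further argument, and your induction (solving the recurrence for $\mu_{2n+2}$, which is legitimate since the leading coefficient $q_+q_-$ is nonzero) together with the triangularity/disjoint-supports argument for uniqueness simply makes explicit what the paper leaves implicit.
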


\begin{lemma}\label{lemma:properties_of_spherical_functions}
On a a semi-homogeneous tree, the following properties of a function $\phi\not\equiv 0$ on $V_+$ are equivalent:
\begin{enumerate}
\item [$(i)$]  $\phi$ on $V_+$ is %the restriction to $V_+$ of 
a spherical function;
\item [$(ii)$ ] for all $v,w\in V_+$, $\calE (\tau[v]\, \phi) (w)= \phi(v)\phi(w)$;
\item [$(iii)$]
the functional $L_\phi(f)= \langle f,\,\phi\rangle$ is multiplicative on the convolution algebra $\ell^1_\#(V_+)$.
\end{enumerate}
\end{lemma}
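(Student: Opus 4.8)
The plan is to prove the cycle of implications $(i)\Rightarrow(ii)\Rightarrow(iii)\Rightarrow(i)$, following Godement's characterisation of spherical functions for a Gelfand pair, with the radialization operator $\calE$ playing the role of averaging over $K=\AutT_{v_0}$ and condition $(ii)$ playing the role of the functional equation $\int_K\phi(xky)\,dk=\phi(x)\phi(y)$. Throughout I will freely use that $\AutT$ preserves distances and the bipartition, so every $\tau[v]$ maps $V_+$ to $V_+$ and acts unitarily on $\ell^2(V_+)$, and that $\phi$ being radial means $L_\phi(f)=\langle f,\phi\rangle$ pairs $f$ only with the radial part: $\langle f,h\rangle=\langle f,\calE h\rangle$ for radial $f$.

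For $(i)\Rightarrow(ii)$: a spherical function is radial with $\phi(v_0)=1$, and by \eqref{eq:recurrence_for_M1} its restriction to $V_+$ is an eigenfunction of $\mu_2$, say with eigenvalue $\sigma$. Since automorphisms commute with $\mu_2$ and $\calE$ commutes with radial convolvers by \eqref{eq:radialization_commutes_with_radial_convolvers}, the radialized translate $\calE(\tau[v]\phi)$ is again a radial $\sigma$-eigenfunction of $\mu_2$ on $V_+$. By the uniqueness of radial eigenfunctions with prescribed value at $v_0$ (Remark~\ref{rem:recurrence_relation_of_the_semi-homogeneous_Laplacian}) it equals $c\,\phi$, where $c=\calE(\tau[v]\phi)(v_0)=\phi(\tau[v]^{-1}v_0)=\phi(v)$, the last step because $\tau[v]^{-1}v_0$ lies on the sphere $C(|v|)$ and $\phi$ is radial. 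This is exactly $(ii)$.

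For $(ii)\Rightarrow(iii)$: first, $(ii)$ forces $\phi$ radial (its left side is radial in $w$, hence so is $\phi(v)\phi(w)$, and $\phi\not\equiv0$) and $\phi(v_0)=1$ (evaluate at $w=v_0$: $\phi(v)=\phi(v)\phi(v_0)$). The core is the identity $\langle\tau[w]f,\phi\rangle=\overline{\phi(w)}\,\langle f,\phi\rangle$ for radial $f$, obtained from unitarity of $\tau[w]$ and the two displayed reductions above as $\langle\tau[w]f,\phi\rangle=\langle f,\tau[w]^{-1}\phi\rangle=\langle f,\calE(\tau[w]^{-1}\phi)\rangle$, combined with the symmetry $\calE(\tau[w]^{-1}\phi)=\calE(\tau[w]\phi)$ and then $(ii)$. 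Feeding this into the radial convolution formula \eqref{eq:radial_homogeneous_convolution} gives
\[
\langle f*g,\phi\rangle=\sum_{w\in V_+}g(w)\,\langle\tau[w]f,\phi\rangle=\langle f,\phi\rangle\sum_{w\in V_+}g(w)\,\overline{\phi(w)}=L_\phi(f)\,L_\phi(g),
\]
i.e.\ $L_\phi$ is multiplicative; it is nonzero since $L_\phi(\delta_{v_0})=\overline{\phi(v_0)}=1$.

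For $(iii)\Rightarrow(i)$: a nonzero multiplicative functional sends the unit $\mu_0=\delta_{v_0}$ to $1$, so $\calE\phi(v_0)=1$. Applying $L_\phi$ to the product formula for $\mu_2*\mu_{2n}$ and using multiplicativity turns it into the three-term recurrence
\[
a_1\,a_n=\frac{1}{(q_++1)q_-}\bigl(a_{n-1}+(q_--1)\,a_n+q_+q_-\,a_{n+1}\bigr),\qquad a_n:=L_\phi(\mu_{2n})=\overline{\calE\phi(2n)},
\]
with $a_0=1$; this is precisely the even part of the spherical recurrence \eqref{eq:semihomogeneous_recurrence} (its $\mu_2$-eigenvalue being $a_1$, up to conjugation), so by uniqueness $\calE\phi$ is the spherical function of the corresponding eigenvalue. \emph{The main obstacle, and the only genuinely non-formal input, is the symmetry $\calE(\tau[w]^{-1}\phi)=\calE(\tau[w]\phi)$} used in $(ii)\Rightarrow(iii)$: it is exactly the point-pair invariance responsible for commutativity of $\ell^1_\#(V_+)$ (Corollary~\ref{cor:Chebyshev_polynomials}), and follows from the transitivity of $\AutT_{v_0}$ on each sphere $C(k)$, since for $w'=\tau[w]^{-1}v_0$ on the same sphere as $w$ an element of $\AutT_{v_0}$ carries $w$ to $w'$ while permuting $C(k)$ and preserving distances. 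A secondary, purely logical point is that $L_\phi$ sees $\phi$ only through $\calE\phi$, so the equivalence is understood for radial $\phi$, as is automatic for spherical functions.
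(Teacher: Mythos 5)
Your proposal is correct and follows essentially the same route as the paper: the same cycle $(i)\Rightarrow(ii)\Rightarrow(iii)\Rightarrow(i)$, with the radialization identity \eqref{eq:radialization_commutes_with_radial_convolvers} together with the length preservation $|\tau[v]^{-1}v_0|=|\tau[v]v_0|$ (your symmetry $\calE(\tau[w]^{-1}\phi)=\calE(\tau[w]\phi)$) doing the work in $(ii)\Rightarrow(iii)$, and the three-term recurrence coming from $\mu_2*\mu_{2n}$ recovering the spherical recurrence in $(iii)\Rightarrow(i)$. The caveat you flag --- that $L_\phi$ sees $\phi$ only through $\calE\phi$, so $(iii)\Rightarrow(i)$ really identifies the radialization of $\phi$ --- is present, equally implicitly, in the paper's own proof.
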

\begin{proof} Since the radial algebra is generated by $\mu_1$, for every $n$ there exists a polynomial $Q_n$ (of degree $n$) such that $\mu_n=Q_n(\mu_1)$.
Let $\phi=\phi(\missarg,v_0\barra\gamma)|_{V_+}$, $u,v\in V_+$ and $n=|w|$. Then $\mu_n * \phi=Q_n(\mu_1) \phi =Q_n(\gamma) \phi$. By \eqref{eq:radialization_commutes_with_radial_convolvers}, $\calE(\tau[v]\, \phi) (w)=\langle  \tau[v]\, \phi, \mu_n\rangle=\phi*\mu_n(v^{-1}=Q_n(\gamma)\phi(v^{-1})=Q_n(\gamma)\phi(v)$. On the other  hand, since $\phi$ is radial, $\phi(w)=\langle \mu_n,\,\phi\rangle = \mu_n * \phi (v_0) = Q_n(\gamma)\phi(v_0)=Q_n(\gamma)$. Therefore $(i)$ implies $(ii)$.

Now let $\phi$ be a function on $V_+$ that satisfies $(ii)$ and choose $v$ such that $\phi(v)\neq 0$. If $(ii)$ holds, for every $w\in V_+$ we have $\phi(w)= \calE (\tau[v]\, \phi) (w)/\phi(v)$. Therefore $\phi$ is radial; moreover, for all radial $f,g$ on $V_+$, by \eqref{eq:convolution_as_inner_product} and \eqref{eq:radialization_commutes_with_radial_convolvers},
\begin{equation}\label{eq:inner_product_between_spherical_function_and_a_convolution}
\begin{split}
L_\phi(f*g) & =\langle f*g,\phi\rangle = 
\bigl\langle \langle \tau[\missarg]\, f,g\rangle,\, \phi(\missarg)\bigr\rangle
=
\bigl\langle f,\, \langle g, \tau[\missarg] \phi\rangle\bigr\rangle
=
\sum_v f(v) \langle \tau[v]^{-1}\phi,g\rangle 
\\[.2cm]
&= \sum_v f(v) \calE (\langle \tau[v]^{-1}\phi,g\rangle)
\sum_{v,w\in V_+} f(v)\,g(w)\,\phi(v)\,\phi(w) = L_\phi(f)\,L_\phi(g),
\end{split}
\end{equation}
since $\phi$ is radial and $|\tau[v]^{-1}(v_0)|=|\tau[v](v_0)|$.
Thus $(ii)$ implies $(iii)$.

If $(iii)$ holds, $f\in \ell^1_\#(V_+)$ and $\phi_2$ denotes the value of the radial function $\phi$ on vertices of length 2, then
\[
L_\phi(\mu_2*f) = \langle \phi,\, \mu_2\rangle\,\langle \phi,\, f\rangle =\phi_2 \,\langle \phi,\, f\rangle.
\]
On the other hand, by \eqref{eq:associativity},
\[
L_\phi(\mu_2*f)  = \langle \phi,\,\mu_2*f\rangle=\langle \phi*\mu_2,\,f\rangle
\]
Since this holds for each radial function $f$, it follows that $\phi$ is an eigenfunction of $\mu_2$ (with eigenvalue $\phi_2$). Moreover, $\phi(v_0)\neq 0$, because a radial eigenfunction of $\mu_2$ that vanish at $v_0$ must vanish everywhere. 
Now,

\begin{equation}\label{eq:phi(v_0)=1}
\phi(v_0)=\phi*\delta_{v_0}(v_0)=\phi*(\delta_{v_0}*\delta_{v_0})(v_0)=(\phi(v_0)^2,
\end{equation}
hence $\phi(v_0)= 1$, and $(i)$ follows.
\end{proof}

\begin{corollary} \label{cor:eigenfunction_properties_of_spherical_functions} If $\phi$ is a spherical function, then $\phi(v_0)=1$, $\mu_2*\phi=\gamma\phi$ with $\gamma=\phi(v)$ for $|v|=2$, and $\mu_{2n}*\phi=P_n(\gamma)\,\phi$, where $P_n$ is the polynomial of Corollary \ref{cor:Chebyshev_polynomials}..
\end{corollary}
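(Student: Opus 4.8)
The three assertions are a compact restatement of facts already established in the proof of Lemma~\ref{lemma:properties_of_spherical_functions} together with Corollary~\ref{cor:Chebyshev_polynomials}, so my plan is to assemble these rather than to prove anything genuinely new; I would dispatch the normalization, the $\mu_2$-eigenvalue equation, and the general $\mu_{2n}$-eigenvalue equation in turn.

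For the normalization $\phi(v_0)=1$ I would cite directly the computation \eqref{eq:phi(v_0)=1} at the close of the proof of Lemma~\ref{lemma:properties_of_spherical_functions}: using $\delta_{v_0}*\delta_{v_0}=\delta_{v_0}$ and the multiplicativity of $L_\phi$ one obtains $\phi(v_0)=\phi(v_0)^2$, whence $\phi(v_0)=1$ since $\phi\not\equiv 0$. For the relation $\mu_2*\phi=\gamma\phi$, the implication $(iii)\Rightarrow(i)$ of the same proof already shows, by comparing $L_\phi(\mu_2*f)=\phi_2\,\langle\phi,f\rangle$ with $L_\phi(\mu_2*f)=\langle\phi*\mu_2,f\rangle$ for every radial $f$, that $\phi$ is an eigenfunction of $\mu_2$ with eigenvalue $\phi_2$, where $\phi_2$ denotes the value of $\phi$ on the circle $C(2)$. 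Since $\ell^1_\#(V_+)$ is abelian one has $\phi*\mu_2=\mu_2*\phi$, and $\phi_2$ is by definition the quantity $\gamma:=\phi(v)$, $|v|=2$, named in the statement; thus $\mu_2*\phi=\gamma\phi$.

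For the last relation I would invoke Corollary~\ref{cor:Chebyshev_polynomials}, which supplies a polynomial $P_n$ with $\mu_{2n}=P_n(\mu_2)$, the powers being convolution powers in $\ell^1_\#(V_+)$. Writing $P_n(x)=\sum_k c_k x^k$ and iterating the eigenvalue equation just obtained, the associativity of convolution \eqref{eq:associativity} yields $\mu_2^{*k}*\phi=\gamma^k\phi$ for each $k$, so by linearity $\mu_{2n}*\phi=P_n(\mu_2)*\phi=\sum_k c_k\,\gamma^k\,\phi=P_n(\gamma)\,\phi$.

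There is no substantial obstacle, the statement being a corollary of material already in hand; the only point deserving a line of care is the passage from the polynomial identity $\mu_{2n}=P_n(\mu_2)$ in the algebra to the scalar identity $\mu_{2n}*\phi=P_n(\gamma)\,\phi$ on the eigenfunction. This rests precisely on the associativity and commutativity of $\ell^1_\#(V_+)$, by which each convolution power $\mu_2^{*k}$ acts on the $\mu_2$-eigenfunction $\phi$ as the scalar $\gamma^k$.
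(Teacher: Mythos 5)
Your proposal is correct and follows essentially the same route as the paper: the normalization from \eqref{eq:phi(v_0)=1}, the $\mu_2$-eigenvalue relation from Lemma~\ref{lemma:properties_of_spherical_functions}, and the passage to $\mu_{2n}$ via Corollary~\ref{cor:Chebyshev_polynomials}. The only (immaterial) difference is that you extract the eigenvalue equation from the $(iii)\Rightarrow(i)$ argument, whereas the paper reads it off from property $(ii)$ via $\mu_2*\phi=\calE(\tau[v]\,\phi)$ for $|v|=2$; your explicit iteration $\mu_2^{*k}*\phi=\gamma^k\phi$ just spells out what the paper leaves implicit.
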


\begin{proof} By \eqref{eq:phi(v_0)=1}, $\phi(v_0)=L_\phi(\delta_{v_0})=1$. Moreover, $\mu_2\phi=\calE(\tau[v]\,\phi)$ for any $v$ with $|v|=2$. Therefore, by Lemma \ref{lemma:properties_of_spherical_functions}\,$(ii)$, 
$\mu_2\phi=\phi(v)\,\phi$ for every such $v$. The remainder of the statement follows from Corollary \ref{cor:Chebyshev_polynomials}.
\end{proof}

\section{Positive definite spherical functions on \texorpdfstring{$V_+$}{V+}}\label{Sec:positive_definite}
\begin{definition} \label{def:positive-definite_functions} $\ell^1(V_+)$ and $\ell^1_\#(V_+)$ are involutive algebras equipped with the involution
$f^*(v)=f^*(\tau[v](v_0))=\overline{f(\tau[v]^{-1}(v_0)}$.

A spherical function is positive definite for $\AutT$ acting on $V_+$ if it induces a positive functional on the involutive algebra $\ell^1(V_+)$.

It is well known that a right-$\AutT_{v_0}$-invariant function  $\phi$ on $\AutT$, or equivalently a  function on $V_+$, is positive definite (with respect to $\AutT$) if and only if it is a matrix coefficient of a unitary representation $\pi_\phi$ of $\AutT$ (acting on a vector space equipped with an inner product) that with respect to this inner product is of the type 
\begin{equation}\label{eq:positive_definite_functions_as_entries_of_unitary_representations}
\phi(v)=\langle h, \pi_\phi(\tau[v]) h\rangle.
\end{equation}
 Here we shall regard $h$ as a function on $V_+$, the action of $\tau=\tau[v]\in\AutT$ being right-$\AutT_{v_0}$-invariant.

Let $\phi$ be a positive definite function on $V_+$ and $\calV_\phi$ the linear span of all translates of $\phi$ under the action of $\Aut(T)$.
 Then $\phi$
 induces a 
            positive semi-definite 
 inner product on $\calV_\phi$ by the rule
 \begin{equation}\label{eq:the_phi-inner_product}
 \langle \tau[v] \phi,\,\tau[w]\phi \rangle_\phi = \phi(\tau[v]^{-1} w).
 \end{equation}
 From now on, we shall denote by $\langle\missarg,\,\missarg\rangle_\phi$ this inner product and with $\langle\missarg,\,\missarg\rangle$ the $\ell^2$-inner product.
 
 Denote by $\calN_\phi$ the subspace of $\calV_\phi$ of all function $f$ such that $\langle f,\,f\rangle_\phi=0$, and $\calH_\phi=\calV_\phi/\calN_\phi$. Then $\Vert f \Vert_\phi=\sqrt{\langle f,\,f\rangle_\phi}$ is  a Hilbert space norm on $\calH_\phi$ (the so-called \emph{GNS-norm}).
 
 Let now $\phi=\phi(\missarg,v_0\barra \gamma)$ and write
 $\calV_\gamma$ instead of $\calV_\phi$, that is the linear span of the functions $\{v\mapsto \phi(\tau v,v_0\barra \gamma)\colon \tau\in\AutT\}$. Then 
 $\calV_\gamma$ is invariant under $\AutT$, and 
 the action of $\AutT$ gives rise to an (algebraic) representation $\pi_\gamma$ of $\AutT$ on $\calV_\gamma$. If $\phi$ is positive definite, then $\pi_\gamma$ extends to a topological representation of $\AutT$ on  the Hilbert space closure $\overline\calH_\gamma$ of$\calH_\gamma= \calV_\gamma / \calZ_\gamma$, called the spherical representation at the eigenvalue $\gamma$.
\end{definition}

\begin{corollary}\label{cor:positive_definite_spherical_functions}
A spherical function %of $M_1$ 
defines a positive functional on the involutive algebra $\ell^1(V_+)$ if and only if it is bounded and real-valued.
%; under this assumption, 
%\begin{equation}\label{eq:the_Hilbert_space_norm_for_unitarizable_spherical_representations}
%\|f\| =\biggl( \langle f^* *f,\,\phi\rangle \biggr)^{\frac12}=|\langle f,\,\phi\rangle|=|L_\phi(f)|
%\end{equation}
% is a Hilbert space norm.
\end{corollary}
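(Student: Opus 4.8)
The plan is to recast positive definiteness as the positive semidefiniteness of a single kernel on $V_+$, and then to split into the two implications: the forward one is formal, while the converse carries the analytic content. First I would check that for every $f\in\ell^1(V_+)$ the element $f^**f$ is automatically radial. Indeed, the formula for the involution together with right-$\AutT_{v_0}$-invariance of $f$ shows that $f^*$ is \emph{left}-$\AutT_{v_0}$-invariant, so $f^**f$ is bi-$\AutT_{v_0}$-invariant and hence lies in $\ell^1_\#(V_+)$; in particular $L_\phi(f^**f)=\langle f^**f,\,\phi\rangle$ is well defined. Unwinding the group convolution on $\AutT$ and the involution, and using that $\phi$ is bi-$\AutT_{v_0}$-invariant so its value on the double coset of $\tau[w]^{-1}\tau[v]$ equals the radial value $\phi(\dist(v,w))$, I would obtain
\[
L_\phi(f^**f)=\overline{\sum_{v,w\in V_+} f(v)\,\overline{f(w)}\;\phi\bigl(\dist(v,w)\bigr)}.
\]
Thus $\phi$ induces a positive functional on $\ell^1(V_+)$ precisely when the Hermitian kernel $K(v,w)=\phi(\dist(v,w))$ is positive semidefinite on $V_+$; this is the concrete form of \eqref{eq:positive_definite_functions_as_entries_of_unitary_representations} I will work with.

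For the forward implication, suppose $K$ is positive semidefinite. Since $\dist$ is symmetric, $K(v,w)=K(w,v)$; positive semidefiniteness forces $K$ to be Hermitian, $K(w,v)=\overline{K(v,w)}$, and comparing the two identities gives $\phi(\dist(v,w))\in\mathR$ for all $v,w$, i.e.\ $\phi$ is real-valued. Testing the $2\times2$ principal minor on $\{v_0,v\}$ and recalling $\phi(v_0)=1$ (Corollary \ref{cor:eigenfunction_properties_of_spherical_functions}) yields $\abs{\phi(v)}\le 1$, so $\phi$ is bounded.

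The substance is the converse. Assume $\phi$ bounded and real; by Lemma \ref{lemma:properties_of_spherical_functions} this says that $L_\phi$ is an $\ell^1$-continuous, Hermitian multiplicative functional on the commutative algebra $\ell^1_\#(V_+)$, attached to a real eigenvalue $\gamma$ in its $\ell^1$-Gelfand spectrum. I would realize $\phi$ as a diagonal matrix coefficient of a unitary representation, along the classical tempered/complementary dichotomy. Using the explicit Poisson-kernel realization of the $\gamma$-spherical function, one builds a representation $\pi_\gamma$ of $\AutT$ on functions on the boundary $\Omega$ (the $\AutT$-action twisted by the power of the Poisson kernel attached to $\gamma$), so that $\phi(v)=\langle h,\,\pi_\gamma(\tau[v])\,h\rangle$ for the constant function $h=\mathbf 1$ and a suitable intertwining inner product. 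When $\gamma$ lies in the $\ell^2$-spectrum of $\mu_2$ this inner product is the manifestly positive $L^2(\Omega,\nu_{v_0})$ product and $\pi_\gamma$ is tempered, which already covers the square-integrable point $\gamma=0$ in the regime $q_+<q_-$. The remaining, delicate case is $\gamma$ real and in the $\ell^1$-spectrum but outside the $\ell^2$-spectrum: there one must produce a genuinely different (complementary-series) inner product and prove that it is positive. This is the step I expect to be the main obstacle, and I would attack it by writing the intertwiner explicitly in terms of the two roots $r_+,r_-$ with $r_+r_-=1/(q_+q_-)$ of the three-term recurrence governing $\mu_2$ on radial functions, and checking positivity of the resulting quadratic form directly.

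Finally I would emphasize why the representation-theoretic input is unavoidable, so as to forestall a shortcut. Commutativity of $\ell^1_\#(V_+)$ together with the Hermitian property already gives positivity of $L_\phi$ on the radial subalgebra for free, since $L_\phi(a^**a)=\abs{L_\phi(a)}^2\ge 0$ for radial $a$. However, for a general $f\in\ell^1(V_+)$ the radial element $f^**f$ need not be of the form $a^**a$ with $a$ radial, so positivity on the full algebra genuinely requires the Hilbert-space model supplied by $\pi_\gamma$; this is exactly the gap that boundedness and reality, through the (complementary-series) inner product, are made to fill.
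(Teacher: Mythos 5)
Your ``only if'' direction (positive functional $\Rightarrow$ bounded and real) matches the paper's: boundedness from the value at $v_0$ via the $2\times 2$ minor, and reality from Hermitian symmetry of the kernel combined with radiality and the fact that $|\tau[v]^{-1}v_0|=|\tau[v]v_0|$. Your observation that $f^**f$ is automatically bi-$\AutT_{v_0}$-invariant for every $f\in\ell^1(V_+)$, and that multiplicativity plus reality give $L_\phi(a^**a)=|L_\phi(a)|^2\geqslant 0$ on the radial subalgebra for free, is also correct and is exactly the first half of the paper's argument for the converse.

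The gap is in the ``if'' direction, and you have named it yourself: you reduce everything to showing $L_\phi(f^**f)\geqslant 0$ for non-radial $f$, assert that this ``genuinely requires the Hilbert-space model supplied by $\pi_\gamma$'', and then only outline that model --- a boundary realization with a principal-series inner product over the $\ell^2$-spectrum and an unspecified complementary-series intertwiner elsewhere, whose positivity you explicitly defer as ``the main obstacle''. That deferred positivity is precisely the content of the corollary for real eigenvalues in the $\ell^1$-spectrum but outside the $\ell^2$-spectrum, so as written the proposal proves the easy half and restates the hard half as a program. It is also not the paper's route: the paper constructs no representation here. It extends positivity from $\ell^1_\#(V_+)$ to $\ell^1(V_+)$ via the radialization identity $L_\phi(h)=L_\phi(\calE h)$, which follows from \eqref{eq:radialization_commutes_with_radial_convolvers} and the radiality of $\phi$, together with the radialization lemmas of \cite{Figa-Talamanca&Picardello}*{Chapter 3, Lemmas 1.2 and 1.3}, which assert that positivity of $\langle f^**f,\,\phi\rangle$ for radial $f$ already forces positivity for all $f$. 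To complete your version you must either carry out the positivity check for the complementary-series form (e.g.\ via the roots of the three-term recurrence, as you suggest, but actually executed), or prove the semi-homogeneous analogue of those radialization lemmas; note that since $f^**f$ is already radial, the identity $L_\phi(h)=L_\phi(\calE h)$ by itself does not close the gap, so one of these two inputs is indispensable.
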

\begin{proof}
Let $\phi$ be a bounded spherical function and $f\in\ell^1_\#(V_+)$. Then $\phi$ is radial, and $\phi(\tau[v]^{-1}(v_0))=\phi(v)$ because 
\begin{equation}\label{eq:inversion_in_Aut(V_+)_preserves_the_length}
|(\tau[v]^{-1}(v_0)|=|\tau[v](v_0)|=|v|.
\end{equation}
By Lemma \ref{lemma:properties_of_spherical_functions}, or more directly by \eqref{eq:inner_product_between_spherical_function_and_a_convolution},   if $\phi$ is real, 
\[
\begin{split}
L_\phi(f^* *f)&=
L_\phi(f^*)\,L_\phi(f) 
= \Bigl(\sum_{v\in V_+}\overline f(\tau[v]^{-1}(v_0)\,\phi(v)\Bigr)\Bigl(\sum_{w\in V_+}  f(w) \phi(w)\Bigr)
\\[.2cm]
&
= \Bigl(\sum_{v\in V_+}\overline f(v)\,\phi(\tau[v](v_0))\Bigr)\Bigl(\sum_{w\in V_+}  f(w) \phi(w)\Bigr)
\\[.2cm]
&
= \Bigl(\sum_{w\in V_+}  \overline f(v)\, \phi(v)\Bigr) \Bigl(\sum_{w\in V_+}  f(w) \phi(w)\Bigr)
\\[.2cm]
&
=\langle \overline f,\,\phi\rangle\,\langle f,\,\phi\rangle=
|\langle f,\,\phi\rangle|^2 \geqslant 0,
\end{split}
\]
%
%while, if $\phi$ is not real valued, for some $f$ the product $\langle \overline f,\,\phi\rangle\,\langle f,\,\phi\rangle$ is not positive or zero.
hence $L_\phi$ is a positive functional on the involutive algebra $\ell^1_\#(V_+)$. Now let $h\in\ell^1(V_+)$. By \eqref{eq:radialization_commutes_with_radial_convolvers}, $L_\phi(h)=\langle h,\,\phi\rangle = \phi*h(v_0) =\phi*\calE h (v_0)=
\langle \calE h,\,\phi\rangle = L_\phi(\calE h) $. Therefore, by \cite{Figa-Talamanca&Picardello}*{Chapter 3, Lemma 1.2 and 1.3}, $L_\phi$ is also a positive functional on the involutive algebra $\ell^1(V_+)$. Thus $\phi$ is positive definite.

Conversely, let $\phi$ be a positive definite function on $V_+$. Then $\phi$ is bounded (by its value at $v_0$), and $\phi(\tau[v]^{-1} v_0) =\overline\phi (\tau[v] v_0)=\overline\phi(v_0)$ for every $v\in V_+$.
Since $\phi$ is radial, \eqref{eq:inversion_in_Aut(V_+)_preserves_the_length} implies that it is real-valued.
\end{proof}

 %\begin{remark}
% Let $\phi$ be a positive definite function on $V_+$ and $\calV_\phi$ the linear span of all translates of $\phi$ under the action of $\Aut(T)$.
% Then $\phi$
% induces an inner product on $\calS_\phi$ by the rule
% \[
% \langle \tau[v] \phi,\,\tau[w]\phi \rangle_\phi = \phi(\tau[v]^{-1} w).
% \]
% Denote by $\calZ_\phi$ the subspace of $\calV_\phi$ of all function $f$ such that $\langle f,\,f\rangle_\phi=0$, and $\calH_\phi=\calV_\phi/\calZ_\phi$. Then $\Vert f \Vert_\phi=\langle f,\,f\rangle_\phi$ is  a Hilbert space norm on $\calH_\phi$ (the so-called GNS-norm).
% \end{remark}
 %
 \begin{corollary}
 If a spherical function $\phi$  on $V_+$ belongs to $\ell^2(V_+)$, then $\phi$ is positive definite, 
 and every function $f$ on $V_+$,   its $\ell^2$-norm and its norm $\| \missarg \|_\phi$ defined by the inner product \eqref{eq:the_phi-inner_product} are related by $\| f \|_{\ell^2(V_+)}=\| \phi \|^2_{\ell^2(V_+)} \, \Vert f \Vert_\phi$.
 \end{corollary}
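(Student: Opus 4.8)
The plan is to reduce everything to a single computation of the $\ell^2(V_+)$-inner products of the translates $\{\tau[v]\,\phi\colon v\in V_+\}$ that span $\calV_\phi$, and to recognize the outcome as a scalar multiple of the GNS form \eqref{eq:the_phi-inner_product}. First observe that membership in $\ell^2(V_+)$ already forces $\phi$ to be bounded, since on the discrete space $V_+$ one has $|\phi(v)|\leqslant\|\phi\|_{\ell^2(V_+)}$ for every $v$; moreover every translate $\tau[v]\,\phi$ again lies in $\ell^2(V_+)$ because $\AutT$ acts on $\ell^2(V_+)$ by isometries. Thus the Gram matrix $\bigl(\langle\tau[v]\,\phi,\,\tau[w]\,\phi\rangle\bigr)_{v,w}$ is well defined, and by Cauchy--Schwarz all the sums below converge absolutely.

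The crux is the identity
\[
\langle\tau[v]\,\phi,\,\tau[w]\,\phi\rangle=\|\phi\|_{\ell^2(V_+)}^2\;\overline{\phi(\dist(v,w))}\qquad(v,w\in V_+).
\]
To prove it I would lift $\phi$ to a bi-$\AutT_{v_0}$-invariant function on $\AutT$, write the inner product as $\sum_{u\in V_+}\phi(\tau[v]^{-1}u)\,\overline{\phi(\tau[w]^{-1}u)}$, and substitute $u\mapsto\tau[v]\,u$ to bring it to the form $\sum_{u}\phi(u)\,\overline{\phi(su)}$ with $s=\tau[w]^{-1}\tau[v]$. Averaging the left factor over $\AutT_{v_0}$ (which changes nothing, $\phi$ being bi-invariant) and applying the spherical functional equation, i.e.\ Lemma \ref{lemma:properties_of_spherical_functions}\,$(ii)$ in its group form $\int_{\AutT_{v_0}}\phi(sk u)\,dk=\phi(s)\,\phi(u)$, collapses the double average to $\overline{\phi(s)}\sum_u|\phi(u)|^2$. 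Since $\phi$ is radial and $s(v_0)$ lies at distance $\dist(v,w)$ from $v_0$, this is exactly $\overline{\phi(\dist(v,w))}\,\|\phi\|_{\ell^2(V_+)}^2$. Justifying the interchange of the finite average over $\AutT_{v_0}$ with the infinite sum is the only analytic point, and it is immediate from absolute convergence. I expect this functional-equation computation to be the main obstacle; once it is in place the rest is formal.

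Two conclusions then follow. First, the Gram matrix on the left is Hermitian while, by the symmetry of the distance, the right-hand side is symmetric in $v,w$; comparing the two forces every entry to be real, so $\phi(\dist(v,w))$ is real for all $v,w$ and hence $\phi$ is real-valued. Being bounded and real-valued, $\phi$ is positive definite by Corollary \ref{cor:positive_definite_spherical_functions}. Second, with $\phi$ now real the identity reads $\langle\tau[v]\,\phi,\,\tau[w]\,\phi\rangle=\|\phi\|_{\ell^2(V_+)}^2\,\phi(\dist(v,w))=\|\phi\|_{\ell^2(V_+)}^2\,\langle\tau[v]\,\phi,\,\tau[w]\,\phi\rangle_\phi$, the last equality being the definition \eqref{eq:the_phi-inner_product}. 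Extending by sesquilinearity to all finite combinations gives $\langle f,g\rangle=\|\phi\|_{\ell^2(V_+)}^2\,\langle f,g\rangle_\phi$ for every $f,g\in\calV_\phi$; in particular $\langle f,f\rangle_\phi\geqslant 0$, reconfirming positive-definiteness, and $\|f\|_{\ell^2(V_+)}^2=\|\phi\|_{\ell^2(V_+)}^2\,\|f\|_\phi^2$, which is the asserted relation between the two norms (with proportionality constant $\|\phi\|_{\ell^2(V_+)}$). Finally, since the identity is continuous in the $\ell^2$-norm, it passes to the closure of $\calV_\phi$, which is where $\|\missarg\|_\phi$ naturally lives.
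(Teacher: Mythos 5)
Your proof is correct and takes essentially the same route as the paper's: both rest on the matrix-coefficient identity $\langle \pi(\tau[v])\phi,\,\phi\rangle=\phi(v)\,\|\phi\|^2_{\ell^2(V_+)}$, obtained from the spherical functional equation of Lemma \ref{lemma:properties_of_spherical_functions}\,$(ii)$ (your average over $\AutT_{v_0}$ is exactly the radialization $\calE$ used in the paper), followed by comparison with \eqref{eq:the_phi-inner_product}. The only cosmetic differences are that you conclude positive-definiteness by first extracting real-valuedness from the Hermitian symmetry of the Gram matrix and then citing Corollary \ref{cor:positive_definite_spherical_functions}, whereas the paper directly recognizes $\phi$ as a positive multiple of a matrix coefficient of the unitary regular representation; and your proportionality constant $\|\phi\|_{\ell^2(V_+)}$ (rather than its square) is indeed what the computation yields, so the exponent in the stated norm relation is a typo that your argument correctly identifies.
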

 
 \begin{proof}
 Let $\pi$ be the right regular representation of $\AutT$ on functions on $\AutT/\AutT_{v_0}\equiv V_+$, that is, $\pi(\tau[v])h(w)=h(\tau[v]^{-1}w)$ for every function $h$ on $V_+$ and every $v,w\in V_+$;  clearly, this is a unitary representation.
  Then, by Lemma \ref{lemma:properties_of_spherical_functions}\,$(ii)$,
 %
% \begin{equation*}
% \begin{split}
% \langle \phi,\,\pi(\tau[v])\phi\rangle
% &=
%\sum_{w\in V_+}  \phi(w)\, \overline\phi(\tau^{-1}[v]w) = 
%\sum_{w\in V_+}  \phi(\tau[v] w)\, \overline\phi(w ) = \sum_{w\in V_+} \int_{\AutT_{v_0}} \phi(\tau[v] \kappa w)\, \overline\phi(w )\,d\kappa
%\\[.2cm]
%&=
% \sum_{w\in V_+}  \calE (\tau[v]^{-1} \phi) (w)\, \overline\phi(w ) =  \sum_{w\in V_+} \phi(v)\,\phi(w)\,\overline\phi(w) = \phi(v) \,\|\phi\|^2_{\ell^2(V_+)}.
% \end{split}
% \end{equation*}
\begin{equation}\label{eq:matrix_coefficients_of_the_reguar_representation_in_the_ell^2-norm}
  \langle \pi(\tau[v])\phi ,\,  \phi \rangle =  \langle \calE (\pi(\tau[v])\phi),\, \phi \rangle = \phi(v)\,\| \phi\|^2_{\ell^2(V_+)}.%=P_{|v|}(\gamma)\,\phi
 \end{equation}
 Therefore the function $\phi$ is a (positive) multiple of a matrix coefficient of a unitary representation, hence it is positive definite by Definition \ref{def:positive-definite_functions}.
 %
% Let now $\gamma$ be the eigenvalue of $\mu_2$ on the spherical function $\phi$. By Lemma \ref{lemma:properties_of_spherical_functions}\,$(ii)$ and Corollary \ref{cor:eigenfunction_properties_of_spherical_functions}, 
On the other hand, by \eqref{eq:the_phi-inner_product}
\begin{equation}\label{eq:matrix_coefficients_of_the_reguar_representation_in_the_phi-norm}
  \langle \pi(\tau[v])\phi,\,  \phi \rangle_\phi =  \phi(v).
 \end{equation}
 The statement follows by comparing \eqref{eq:matrix_coefficients_of_the_reguar_representation_in_the_ell^2-norm}
 and \eqref{eq:matrix_coefficients_of_the_reguar_representation_in_the_phi-norm}.
 \end{proof}

  \begin{remark}
 If $\phi=\phi(\cdot,v_0\barra\gamma)\in\ell^2(V_+)$ and $\gamma\in\spectrum_{\ell^1(V_+)} \mu_2$, then we have $\Vert f \Vert_\phi=\| \phi \|^2_{\ell^2(V_+)}\| f \|_{\ell^2(V_+)}$.

Indeed, by Corollary \ref{cor:positive_definite_spherical_functions}, a spherical function on $V_+$  is positive definite if and only if it is real-valued and bounded. On the other hand,
it is known \cite{Casadio-Tarabusi&Picardello-semihomogeneous_spectra} that a spherical function on $V_+$ is real valued and bounded if (and only) if its eigenvalue belongs to $\spectrum_{\ell^1(V_+)} \mu_2$.
 \end{remark}

\section{Spherical representations of \texorpdfstring{$\AutT$}{Aut T}}\label{Sec:spherical_representations}
On the basis of the previous Sections, the results of  \cite{Figa-Talamanca&Picardello} now open
the way to the spherical representation theory of $\AutT$:

 \begin{theorem}\label{theo:square-integrable_representation,unitarizable_representations,Irreducible_representations_of_AutT}
 Denote by  $S_1$ the spectrum of $\mu_1$ on $\ell^1(V)$, and by $S_2$ its spectrum on $\ell^2(V)$.
 \begin{enumerate}
 \item [$(i)$]
$\phi(\tau v,v_0\barra \gamma)$ is real-valued and bounded if and only if $\gamma\in S_1 \cap \mathR$. For these eigenvalues, the representation $\pi_\gamma$ is a unitary representation on the Hilbert space $\overline\calH_\gamma$.
% in the Hilbert space norm $ \Vert \missarg \Vert_\phi$. %defined by the inner product \eqref{eq:the_phi-inner_product}.

\item[$(ii)$]
 The Plancherel measure associated to the regular representation
 $\rho$ of $\AutT$ on $\ell^2(V)$ is absolutely continuous with respect to Lebesgue measure on $ S_2\subsetneq\mathR$, except for an atom at the eigenvalue $0$ if (and only if) $q_+<q_-$. For these eigenvalues $\gamma$, the representation $\pi_\gamma$ is weakly contained in the regular representation $\rho$.

 \item[$(iii)$] If $\gamma=0$ and $q_+<q_-$, then $\phi(\tau v,v_0\barra 0)\in \ell^2(V)$ and, on the norm-closure of $\calV_0$, $\pi_0$ is a square-integrable representation, that is, a subrepresentation of the regular representation of $\AutT$ on $\ell^2(V_+)$.

 \item [$(iv)$] All the representation $\pi_\gamma$ on $\ell^2(V_+)$ for $\gamma\in \Int S_1$ are irreducible.

 \item[$(v)$] The representations $\rho$ and $\pi_\gamma$ can be defined also on spaces of functions on $V_-$ and on $V$. Since the action of $\AutT$ preserves $V_\pm$, $\rho$ and each $\pi_\gamma$, when regarded on $V)$, decomposes as direct sum of its restrictions to $V_+$ and $V_-$. In particular, for $\gamma\in \Int S_1$,\,$\pi_\gamma$ is the direct sum of two irreducible subrepresentations.
 \end{enumerate}
 \end{theorem}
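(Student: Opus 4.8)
The plan is to dispatch the five assertions by leveraging the positive-definiteness results of Section~\ref{Sec:positive_definite} for (i) and (iii), the spherical Plancherel theory already available for the polygonal graph $\Gamma_+$ for (ii), and orbit-invariance for (v), while reserving the genuine work for the irreducibility in (iv). For (i) I would argue as follows. By Corollary~\ref{cor:positive_definite_spherical_functions} the restriction $\phi(\missarg,v_0\barra\gamma)|_{V_+}$ is positive definite exactly when it is bounded and real-valued. Real-valuedness of the whole-tree spherical function is immediate from the recurrence \eqref{eq:semihomogeneous_recurrence}, whose coefficients are real and whose first value is $\phi(v_1)=\gamma$; thus $\phi(\missarg,v_0\barra\gamma)$ is real-valued iff $\gamma\in\mathR$. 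Boundedness is the spectral condition: by the Remark closing Section~\ref{Sec:positive_definite} it is equivalent to the $\mu_2$-eigenvalue $\phi_2$ lying in $\spectrum_{\ell^1(V_+)}\mu_2$, and the quadratic relation $\mu_1^2=\frac1{q_++1}\,\id+\frac{q_+}{q_++1}\,\mu_2$ on $V_+$ coming from \eqref{eq:recurrence_for_M1}, together with the spectral computation of \cite{Casadio-Tarabusi&Picardello-semihomogeneous_spectra}, translates this into $\gamma\in S_1$. Hence $\phi(\tau v,v_0\barra\gamma)$ is bounded and real-valued iff $\gamma\in S_1\cap\mathR$, and for such $\gamma$ the positive-definiteness feeds the GNS construction of Definition~\ref{def:positive-definite_functions}, producing the unitary representation $\pi_\gamma$ on $\overline\calH_\gamma$.

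For (iii), if $q_+<q_-$ then \eqref{eq:phi(v,v_0,0)_in_l^2_iff_q_+<q_-} gives $\phi(\missarg,v_0\barra 0)\in\ell^2(V)$, so the $\ell^2$-spherical-function Corollary of Section~\ref{Sec:positive_definite} applies: the matrix-coefficient identity \eqref{eq:matrix_coefficients_of_the_reguar_representation_in_the_ell^2-norm} exhibits $\phi$ as a positive multiple of a diagonal coefficient $\langle\pi(\tau[v])\phi,\phi\rangle$ of the regular representation $\pi$ on $\ell^2(V_+)$. Consequently the cyclic subspace $\overline{\pi(\AutT)\phi}\subset\ell^2(V_+)$ is an $\AutT$-invariant subspace realizing $\pi_0$, and the accompanying norm identity shows that $\overline\calH_0$ is isometric to it up to the scalar $\|\phi\|^2_{\ell^2(V_+)}$; thus $\pi_0$ is a subrepresentation of the regular representation, i.e.\ square-integrable. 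Assertion (ii) I would read off the spherical Plancherel formula for the Gelfand pair $(\AutT,\AutT_{v_0})$ acting on $V_+$, equivalently for $\Gamma_+$ as in \cite{Iozzi&Picardello-Springer}: its density is the reciprocal of the squared Harish--Chandra $c$-function supported on $S_2$, which is absolutely continuous there, while the isolated $\ell^2$ spherical function at $0$ produced in (iii) contributes a point mass precisely when $q_+<q_-$. Weak containment of each $\pi_\gamma$, $\gamma\in S_2$, in $\rho$ is then the standard fact that the constituents of a direct-integral (Plancherel) decomposition are weakly contained in the representation being decomposed.

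The heart of the theorem, and the step I expect to be the main obstacle, is the irreducibility in (iv). Although $\AutT$ is not transitive on $V$, it is transitive on $V_+$, so the sphere-averaging argument of \cite{Figa-Talamanca&Nebbia} and \cite{Figa-Talamanca&Steger} survives on a single orbit. Realize $\pi_\gamma$ on $\overline\calH_\gamma$ with cyclic $\AutT_{v_0}$-fixed vector $\phi$; since $(\AutT,\AutT_{v_0})$ is a Gelfand pair the space of $\AutT_{v_0}$-fixed vectors is one-dimensional, namely $\mathC\phi$. The plan is to prove that the rank-one projection $P_\phi$ onto $\mathC\phi$ lies in the von Neumann algebra $\mathcal M=\pi_\gamma(\AutT)''$, by showing that a suitable normalization of the averaged operators $\pi_\gamma(\mu_{2n})$---the averages of $\pi_\gamma(\tau)$ over the automorphisms $\tau$ carrying $v_0$ into the sphere $C(2n)$---converges weakly to $P_\phi$ as $n\to\infty$. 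This is exactly where the radial asymptotics of $\phi(\missarg,v_0\barra\gamma)$ and the hypothesis $\gamma\in\Int S_1$ enter: the interior condition guarantees that the spherical function is bounded with the strict decay that makes the limit rank one and nondegenerate, whereas the boundary eigenvalues are excluded precisely because there the limit degenerates. Once $P_\phi\in\mathcal M$, any $T$ in the commutant $\pi_\gamma(\AutT)'$ commutes with $P_\phi$, hence fixes $\mathC\phi$, so $T\phi=c\phi$; by cyclicity $T\pi_\gamma(\tau)\phi=c\,\pi_\gamma(\tau)\phi$ for all $\tau\in\AutT$, whence $T=c\,\id$ and $\pi_\gamma$ is irreducible. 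The delicate point, which I expect to require importing the precise radial expansion of $\phi(\missarg,v_0\barra\gamma)$ from \cite{Casadio-Tarabusi&Picardello-semihomogeneous_spectra}, is establishing this weak convergence of the sphere averages.

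Finally, (v) is a matter of bookkeeping. Repeating Sections~\ref{Sec:multiplicative_functionals}--\ref{Sec:positive_definite} with a reference vertex $v_-\in V_-$ produces the analogous algebra, spherical functions, and representations on $V_-$. Because every automorphism in $\AutT$ preserves the partition $V=V_+\sqcup V_-$, the orthogonal decomposition $\ell^2(V)=\ell^2(V_+)\oplus\ell^2(V_-)$ is $\AutT$-invariant, so $\rho$ and each $\pi_\gamma$, viewed on $V$, split as the direct sum of their restrictions to the two orbits. For $\gamma\in\Int S_1$ each restriction is irreducible by (iv), which gives the asserted decomposition of $\pi_\gamma$ into two irreducible subrepresentations.
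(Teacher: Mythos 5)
Your proposal is correct and follows essentially the same route as the paper: part (i) via Corollary~\ref{cor:positive_definite_spherical_functions} together with the spectral characterization from \cite{Casadio-Tarabusi&Picardello-semihomogeneous_spectra}, part (ii) via the polygonal-graph Plancherel measure of \cites{Iozzi&Picardello-Springer,Faraut&Picardello}, part (iii) via \eqref{eq:phi(v,v_0,0)_in_l^2_iff_q_+<q_-} and the $\ell^2$ matrix-coefficient corollary, part (iv) by the sphere-averaging projection argument of \cites{Figa-Talamanca&Picardello-JFA,Figa-Talamanca&Nebbia,Figa-Talamanca&Steger} transported to the single orbit $V_+$, and part (v) by orbit invariance. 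The paper leaves (i), (ii) and (iv) largely to citations, so your sketch actually supplies more of the intermediate reasoning (correctly), with the same acknowledged open detail in (iv) --- the weak convergence of the normalized sphere averages to the rank-one projection --- being exactly the step the cited homogeneous-tree references carry out.
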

 
 \begin{proof}
For part $(i)$ it is enough to show that $\phi(\tau v,v_0\barra \gamma)$ is real-valued and bounded if and only if $\gamma\in S_1$, and that the norm  $ \Vert \missarg \Vert_\phi$ is the $\ell^2$-norm if and only if $\gamma\in S_2$: this
follows from \cite{Casadio-Tarabusi&Picardello-semihomogeneous_spectra}*{Theorem 5.8}.

%Theorem \ref{theo:computation_of_semi-homogeneous_spherical_functions_via_Poisson_kernel}. 
For part $(ii)$, it has been observed in \cite{Casadio-Tarabusi&Picardello-semihomogeneous_spectra} (see also \cite{Casadio-Tarabusi&Picardello-algebras_generated_by_Laplacians}) that
the set $V_+$, when regarded as the Cayley graph of the step-2 isotropic operator $\mu_2$, becomes a polygonal graph in the sense of \cite{Iozzi&Picardello-Springer}, therefore the regular representation of $\AutT$ on $\ell^2(V_+)$ can be realized by the action on this polygonal graph. In this setting, the Plancherel measure $m_\rho$ was computed in \cite{Faraut&Picardello}. An accurate comparison of our current setting with that of \cites{Iozzi&Picardello-Springer, Faraut&Picardello} shows that $m_\rho$ is
 absolutely continuous with respect to Lebesgue measure, with a continuous Radon-Nykodim derivative, except for an atom at $\gamma=0$ if $q_+<q_-$, and $\rho$ decomposes on $\ell^2(V_+)$ as $\rho=\int_{S_2}^\oplus \pi_\gamma \,dm(\gamma)$. For each $\gamma\in S_2$ and for $\delta> 0$ the matrix coefficients of $\pi_\gamma$ is  a uniform limit  on finite sets, as $\delta\to 0$, of coefficients of the representation
$\frac1\delta \int_{U_\delta}^\oplus \pi_\gamma \,dm(\gamma)$ where $U_\delta$ is a neighborhhod of $\gamma$ of radius $\delta$. Therefore $\pi_\gamma$ is weakly contained in $\rho$.

Part $(iii)$ follows  from \eqref{eq:phi(v,v_0,0)_in_l^2_iff_q_+<q_-}. 

Part $(iv)$ is proved as in the setting of homogeneous trees \cites{Figa-Talamanca&Picardello-JFA,Figa-Talamanca&Picardello}; see also \cites{Figa-Talamanca&Steger,Figa-Talamanca&Nebbia}.

Part $(v)$ is clear.
\end{proof}

%Then 
%$u_1*u_2(\widetilde\tau)=\int_{\AutT_{v_0}} u_1\left({\widetilde{\lambda^{-1}\tau}}\right)\,u_2(\widetilde\lambda)\,d\widetilde\mu(\widetilde\lambda)=\int_{\AutT_{v_0}} u_1\left(\widetilde{\lambda^{-1}}\right)\,u_2(\widetilde{\tau\lambda})\,d\widetilde\mu(\widetilde\lambda)$
%
%
%$u_1*u_2(\tau)= \int_G u_1(\lambda^{-1}\tau)\,u_2(\lambda)\,d\mu(\lambda)$
%

\begin{bibdiv}
\begin{biblist}

\bib{Cartier-Symposia}{article}{
 author={Cartier, Pierre},
 title={Fonctions harmoniques sur un arbre},
  book={
  series={Symposia Math.},
  volume={9},
  pages={203--270},
  publisher={Ist. Naz. Alta Matem.},
  place={Rome},
  date={1972},
  },
  }

% \bib{Cartier}{article}{
%  author={Cartier, P.},
%  title={Harmonic analysis on trees},
%  conference={
%   title={Harmonic analysis on homogeneous spaces (Proc. Sympos. Pure
%   Math., Vol. XXVI, Williams Coll., Williamstown, Mass., 1972)},
%  },
%  book={
%   publisher={Amer. Math. Soc.},
%   place={Providence, R.I.},
%  },
%  date={1973},
%  pages={419--424},
% % review={\MR{0338272 (49 \#3038)}},
% }

\bib{CCKP}{article}{
 author={Casadio~Tarabusi, Enrico},
 author={Cohen, Joel~M.},
 author={Kor{\'a}nyi, Adam},
 author={Picardello, Massimo~A.},
 title={Converse mean value theorems on trees and symmetric spaces},
 journal={J. Lie Theory},
 volume={8},
 date={1998},
 number={2},
 pages={229--254},
 issn={0949-5932},
% review={\MR{1650349 (2000c:43011)}},
}

\bib{Casadio-Tarabusi&Gindikin&Picardello-Book}{book}{
 author={Casadio~Tarabusi, Enrico},
 author={Gindikin, Simon~G.},
 author={Picardello, Massimo~A.},
 title={Integral geometry and harmonic analysis on homogeneous trees},
 status={monograph in preparation (arXiv 2002.09099)},
}

\bib{Casadio-Tarabusi&Picardello-algebras_generated_by_Laplacians}{article}{
 author={Casadio~Tarabusi, Enrico},
 author={Picardello, Massimo~A.},
 title={The algebras generated by the Laplace operators
        in a semi-homogeneous tree},
 book={
  title={Trends in Harmonic Analysis},
  series={Springer INDAM Series},
  volume={3},
  publisher={Springer-Verlag Italia},
  place={Milan Heidelberg New York Dordrecht London},
  date={2013},
  isbn={978-88-470-2852-4},
  issn={2281-518X},
 },
 pages={77--90},
 doi={10.1007/978-88-470-285-1\_5},
}

\bib{Casadio-Tarabusi&Picardello-Radon_on_trees_and_counterparts}{article}{
 author={Casadio~Tarabusi, Enrico},
 author={Picardello, Massimo~A.},
 title={Radon transform on hyperbolic spaces and their discrete counterparts},
 journal={Complex Anal. Oper. Theory},
 volume={15},
 date={2021},
 number={1},
 pages={paper no.~13},
 issn={1661-8254},
 doi={10.1007/s11785-020-01055-6},
 }

\bib{Casadio-Tarabusi&Picardello-semihomogeneous_spectra}{article}{
 author={Casadio~Tarabusi, Enrico},
 author={Picardello, Massimo~A.},
 title={The spectrum of the Laplacian on semi-homogeneous trees},
 status={to appear},
 }

 \bib{Faraut&Picardello}{article}{
  author={Faraut, Jacques},
  author={Picardello, Massimo~A.},
  title={The Plancherel measure for symmetric graphs},
  journal={Ann. Mat. Pura Appl.},
  volume={138},
  date={1984},
  pages={151--155},
 }

 \bib{Figa-Talamanca&Nebbia}{book}{
  author={Fig{\`a}-Talamanca, Alessandro},
  author={Nebbia, Claudio},
  title={Harmonic Analysis and Representation Theory for Groups Acting on Homogeneous Trees},
  series={London Mathematical Society Lecture Notes Series},
  volume={61},
  publisher={Cambridge University Press},
  place={Cambridge, New York, Port Chester, Melbourne, and Sydney},
  date={1991},
  pages={ix+151},
  isbn={0-521-42444-5},
 % review={Bull. Amer. Math. Soc. (N.S.), vol. 31, no 2,? 1994, p. 271?274},
 }

\bib{Figa-Talamanca&Picardello-JFA}{article}{
 author={Fig{\`a}-Talamanca, Alessandro},
 author={Picardello, Massimo~A.},
 title={Spherical functions and harmonic analysis on free groups},
 journal={J. Funct. Anal.},
 volume={47},
 date={1982},
 pages={281--304},
}

\bib{Figa-Talamanca&Picardello}{book}{
 author={Fig{\`a}-Talamanca, Alessandro},
 author={Picardello, Massimo~A.},
 title={Harmonic Analysis on Free Groups},
 series={Lecture Notes in Pure and Applied Mathematics},
 volume={87},
 publisher={Marcel Dekker Inc.},
 place={New York},
 date={1983},
 pages={viii+145},
 isbn={0-8247-7042-0},
% review={\MR{710827 (85j:43001)}},
}

\bib{Figa-Talamanca&Steger}{book}{
 author={Fig{\`a}-Talamanca, Alessandro},
 author={Steger, Tim},
 title={Harmonic Analysis for Anisotropic Random Walks on Homogeneous Trees},
 series={Mem. Amer. Math. Soc.},
 volume={110},
 publisher={Amer. Math. Soc.},
 place={Providence},
 date={1994},
 pages={viii+68},
 }

\bib{Haagerup}{article}{
author={Haagerup, Uffe},
 title={An example of a non-nuclear $C^*$-algebra which has the metric approximation property},   
 journal={Inventiones Math.},
 volume={50},
 date={1979},
 pages={279--293},
}

 \bib{Iozzi&Picardello}{article}{
  author={Iozzi, Alessandra},
  author={Picardello, Massimo~A.},
  title={Graphs and convolution operators},
  conference={
   title={Topics in modern harmonic analysis},
   date={1982},
   address={Turin/Milan},
  },
  book={
   volume={I, II},
   date={1983},
   publisher={INdAM},
   address={Rome},
  },
  pages={187--208},
 %   review={\MR{748864}},
 }

\bib{Iozzi&Picardello-Springer}{article}{
 author={Iozzi, Alessandra},
 author={Picardello, Massimo~A.},
 title={Spherical functions on symmetric graphs},
 conference={
  title={Harmonic analysis},
  address={Cortona},
  date={1982},
 },
 book={
  series={Lecture Notes in Math.},
  volume={992},
  publisher={Springer},
  address={Berlin},
  date={1983},
  isbn={3-540-12299-0},
 },
 pages={344--386},
% review={\MR{729363}},
 doi={10.1007/BFb0069168},
}

\bib{Kuhn&Soardi}{article}{
author={Kuhn, Maria Gabriella},
author={Soardi, Paolo Maurizio},
title={The Plancherel measure for Polygonal Graphs},
journal={Ann. Mat. Pura Appl.},
volume={134},
year={1983},
pages={393--401},
}

\bib{Woess}{book}{
author={ Woess, W.},
title={Denumerable Markov Chains. Generating
functions, Boundary Theory, Random Walks on Trees.} ,
publisher={European Math.
Soc. Publishing House}, 
date={2009},
}

\end{biblist}
\end{bibdiv}

%\ifodd\thepage\newpage$ $\fi % to have an even number of pages

\end{document}